\numberwithin{equation}{section}
\newtheorem{teo}{Theorem}[section]
\newtheorem{prop}[teo]{Proposition} 
\newtheorem{cor}[teo]{Corollary} 
\newtheorem{lema}[teo]{Lemma}
\newtheorem{defi}[teo]{Definition} 
\newtheorem{obs}[teo]{Remark}
\newcommand{\KK}{{\omega_X}}
\newcommand{\R}{{\mathcal{R}}}
\newcommand{\E}{{\mathcal{E}}}
\newcommand{\V}{{\mathcal{V}}}
\newcommand{\LL}{{\mathcal{L}}}
\newcommand{\Oc}{{\mathcal{O}}}
\newcommand{\bC}{{\mathbb{C}}}
\DeclareMathOperator*{\Pic}{Pic}
\DeclareMathOperator*{\End}{End}
\DeclareMathOperator*{\Hom}{Hom}
\DeclareMathOperator*{\Grass}{Grass}
\DeclareMathOperator*{\Dim}{dim}
\DeclareMathOperator*{\Deg}{deg}
\DeclareMathOperator*{\Rk}{rk}
\newcommand{\ox}{{\otimes}}
\numberwithin{equation}{section}
\begin{document}

\baselineskip=16pt

\title[Moduli of Higgs coherent systems]{Moduli of Higgs Coherent Systems with stable underlying vector bundles over a curve}

\author{Edgar I. Castañeda-González}

\address{Centro de Investigación en Matemáticas, A.C
\newline  Jalisco \\ Guanajuato, Guanajuato, M\'exico.}
\email{edgar.castaneda@cimat.mx}

\thanks{}

\subjclass[2010]{}

\keywords{Moduli space, Higgs bundles, coherent systems, Higgs coherent systems.}

\date{\today}

\begin{abstract}
Let $X$ be a smooth, connected complex projective curve of genus at least $2$. A Higgs coherent system is an augmented bundle $(E,V)$, where $E$ is a holomorphic vector bundle, and $V$ is a linear subspace of the spaces of Higgs bundles of $E$. The purpose of this paper is twofold. First, we introduce the moduli problem of Higgs coherent systems over $X$. Second, we construct the fine moduli space for Higgs coherent systems with stable underlying vector bundles.
\end{abstract}

\maketitle

\section{Introduction}
Let $X$ be a smooth connected complex projective curve of genus $g\geq 2$, and denote by $\KK$ its canonical bundle. The moduli spaces of Higgs bundles and coherent systems over $X$ have been extensively studied due to their relevance in connecting geometry, topology, and physics. 

A Higgs bundle over $X$ is a pair $(E,\varphi)$, where $E$ is a vector bundle over $X$ and $\varphi$ is a Higgs field of $E$, that is $\varphi$ is a section in $H^0(X,\End(E)\ox \KK)$. Higgs bundles were first studied by  N. J. Hitchin as the solutions of the Yang-Mills equations on a Riemann surface \cite{hitchin} establishing a connection between differential equations and algebraic geometry. Since then, they have become central objects intersecting geometry, topology, and physics (\cite{whatHiggs}, \cite{janHiggs}, \cite{lauraHiggs}).  Diverse perspectives include foundational work by  M. F. Atiyah and R. Bott \cite{atiyahbott}, the generalization to higher dimension by C. Simpson \cite{simpson}, the study of its moduli space by T. Hausel \cite{hausel}, and its important role in N. Bao Ch$\hat{\text{a}}$u’s proof of the Fundamental Lemma \cite{ngo}. On the other hand, a coherent systems is a pair $(E,V)$, where $E$ is a vector bundle and $V$ is a linear subspace of $H^0(X,E)$. Coherent systems have contributed significantly to the understanding of the geometry of the moduli space of vector bundles \cite{bradlowsc}, \cite{bradprada}, \cite{newsteadques}.

In this paper, we study augmented bundles that are related to Higgs bundles and coherent systems, called {\bf Higgs coherent systems}. These augmented bundles can be understood as a collection of Higgs bundles approached through the perspective of coherent systems, where we consider a vector bundle together with a linear subspace of Higgs fields rather than a single Higgs field. Specifically, a Higgs coherent system of type $(n,d,k)$ is a pair $(E,V)$ where $E$ is a vector bundle over $X$ of rank $n$ and  degree $d$, and $V$ is a $k$-dimensional subspace of $H^0(X,\End(E)\ox\KK)$. The aim of this paper is to study the moduli problem of Higgs coherent system whose underlying vector bundles are stable and to construct the moduli space of such augmented bundles.

Let $n,d,k$ be three integers such that $(n,d)=1$ and $0<k\leq n^2(g-1)+1$. Denote by $SH(n,d,k)$ the set of Higgs coherent systems $(E,V)$ of type $(n,d,k)$ such that $E$ is a stable vector bundle. The stability assumption allows us to describe the isomorphism classes for these Higgs coherent systems and to construct a parameter scheme for them. This parameter scheme inherits some of geometry properties of the moduli space of stable bundles, as stated in the following result.
\begin{teo}{(Theorem \ref{ParaStable})}
The space of isomorphism classes of Higgs coherent systems in $SH(n,d,k)$ is a smooth irreducible projective scheme of dimension $(k+1)(n^2(g-1)+1)-k^2$.
\end{teo}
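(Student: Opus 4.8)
The plan is to realize $SH(n,d,k)$, modulo isomorphism, as the total space of a relative Grassmannian over the moduli space $\M(n,d)$ of stable bundles of rank $n$ and degree $d$, and then to read off every asserted property fiberwise from the corresponding property of $\M(n,d)$. Since $(n,d)=1$, stability and semistability coincide for this numerical type, so $\M(n,d)$ is a fine moduli space---smooth, irreducible and projective of dimension $n^2(g-1)+1$---and it carries a universal (Poincar\'e) bundle $\E$ on $X\times\M(n,d)$. First I would pin down the numerical input. For any stable $E$, stability forces $\End(E)\cong\bC$, so $h^0(X,\End(E))=1$; since $\Deg(\End(E))=0$ and $\Rk(\End(E))=n^2$, Riemann--Roch gives $\chi(\End(E)\ox\KK)=n^2(g-1)$, while Serre duality identifies $H^1(X,\End(E)\ox\KK)$ with $H^0(X,\End(E))^\ast$ (using the trace pairing $\End(E)^\ast\cong\End(E)$), which is one-dimensional. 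Hence $h^0(X,\End(E)\ox\KK)=n^2(g-1)+1=:N$, a value independent of the chosen stable $E$; this is exactly the space in which the $k$-plane $V$ lives, and it explains the hypothesis $0<k\le N$.

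Next I would build the bundle of Higgs fields. Writing $p$ and $q$ for the projections of $X\times\M(n,d)$ onto $\M(n,d)$ and $X$, I would form $\mathcal W:=p_\ast\bigl(\End(\E)\ox q^\ast\KK\bigr)$. Because $h^0$ is constant, equal to $N$, on every fiber, cohomology-and-base-change shows $\mathcal W$ is locally free of rank $N$ with fiber $H^0(X,\End(E)\ox\KK)$ over $[E]$. Note also that replacing $\E$ by $\E\ox p^\ast L$ leaves $\End(\E)$---and therefore $\mathcal W$---unchanged, so the construction does not depend on the normalization of the Poincar\'e bundle. The candidate parameter scheme is then the relative Grassmannian $\Grass(k,\mathcal W)\to\M(n,d)$.

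It then remains to identify this scheme and conclude. A point of $\Grass(k,\mathcal W)$ above $[E]$ is precisely a $k$-dimensional subspace $V\subset H^0(X,\End(E)\ox\KK)$, that is, a Higgs coherent system in $SH(n,d,k)$; moreover two such are isomorphic exactly when they coincide as points, because an isomorphism of the underlying stable bundles is a nonzero scalar, which acts trivially by conjugation on $\End(E)$ and hence fixes $V$. Thus no further quotient is needed and $\Grass(k,\mathcal W)$ parametrizes the isomorphism classes in $SH(n,d,k)$. Being a locally trivial Grassmann bundle over the smooth, irreducible, projective base $\M(n,d)$ with smooth irreducible projective fibers $\Grass(k,N)$, its total space is smooth, irreducible and projective, of dimension
\[
\Dim\M(n,d)+k(N-k)=N+k(N-k)=(k+1)N-k^2=(k+1)\bigl(n^2(g-1)+1\bigr)-k^2 .
\]

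\textbf{Main obstacle.} The hard part will be the two foundational points underlying Steps~1 and~2 rather than the final dimension count: first, the local freeness of $\mathcal W$, which rests on the constancy of $h^0$ over $\M(n,d)$ together with base change; and second, the clean set-theoretic identification in the last paragraph, namely verifying that scalar automorphisms act trivially on Higgs fields so that the relative Grassmannian genuinely represents isomorphism classes, and that the entire construction is insensitive to the choice of Poincar\'e bundle.
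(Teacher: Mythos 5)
Your proposal is correct and follows essentially the same route as the paper: a relative Grassmannian of $k$-planes in the bundle of Higgs fields over $M^s(n,d)$, with isomorphism classes collapsing to points because stable bundles are simple and scalar automorphisms act trivially by conjugation, and the same dimension count. The only cosmetic difference is that you realize the bundle of Higgs fields as the direct image $p_*(\End(\E)\ox q^*\KK)$ (justified by constancy of $h^0$ and base change), whereas the paper uses the canonically isomorphic sheaf $(R^1\pi_{2*}\End(\E))^\vee$, identified with the cotangent bundle $\Omega^1 M^s(n,d)$ via deformation theory.
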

We construct a family of Higgs coherent systems in $SH(n,d,k)$ with the universal property parameterized by the above parameter scheme, this allows us to prove the existence of a fine moduli space. The result is presented as follows.
\begin{teo}{(Theorem \ref{ModuliHcsstable})}
There exists a fine moduli space for Higgs coherent systems in $SH(n,d,k)$ in the cathegory of smooth irreducible projective schemes.
\end{teo}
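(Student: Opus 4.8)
The plan is to exhibit the parameter scheme of Theorem \ref{ParaStable} as the base of a universal family and then verify representability of the moduli functor. Write $M=M(n,d)$ for the moduli space of stable bundles of rank $n$ and degree $d$; since $(n,d)=1$ this is a fine moduli space carrying a universal (Poincaré) bundle $\U$ on $X\times M$. The crucial numerical input, already used in Theorem \ref{ParaStable}, is that for every stable $E$ one has $h^0(X,\End(E)\ox\KK)=n^2(g-1)+1$ and $h^1(X,\End(E)\ox\KK)=1$, both independent of $E$ (by Riemann--Roch together with $\End(E)$ being simple and self-dual). Consequently, writing $p_M\colon X\times M\to M$ and $p_X\colon X\times M\to X$ for the projections, cohomology and base change guarantees that
\[
W:=p_{M*}\bigl(\End(\U)\ox p_X^*\KK\bigr)
\]
is a vector bundle on $M$ of rank $N:=n^2(g-1)+1$ whose formation commutes with arbitrary base change; note that $\End(\U)$ is insensitive to the line-bundle ambiguity in $\U$, so $W$ is canonically defined. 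The parameter scheme of Theorem \ref{ParaStable} is then realized as the Grassmann bundle $\mathcal{G}:=\Grass(k,W)\xrightarrow{\pi}M$, and its dimension $\dim M+k(N-k)=(k+1)(n^2(g-1)+1)-k^2$ reproduces the stated one as a consistency check.

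Next I would construct the universal family over $\mathcal{G}$. Pulling back $\U$ along $X\times\mathcal{G}\to X\times M$ yields a bundle $\widetilde{\U}$ whose fibre over a point $(E,V)$ is $E$, while the tautological sequence $0\to\Ss\to\pi^*W\to\mathcal{Q}\to0$ on $\mathcal{G}$ provides a rank-$k$ subbundle $\Ss\subset\pi^*W$ whose fibre over $(E,V)$ is precisely $V\subset H^0(X,\End(E)\ox\KK)$. The pair $(\widetilde{\U},\Ss)$ is the candidate universal Higgs coherent system. To prove universality, let $T$ be a scheme and let $(\F,\V)$ be a family of Higgs coherent systems over $T$, that is, a bundle $\F$ on $X\times T$, flat over $T$, with stable fibres of type $(n,d)$, together with a rank-$k$ subbundle $\V\subset p_{T*}(\End(\F)\ox p_X^*\KK)$. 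By the universal property of $M$ there is a unique morphism $f_M\colon T\to M$ and a line bundle $L$ on $T$ with $\F\cong(\mathrm{id}\times f_M)^*\U\ox p_T^*L$; the twist by $L$ cancels in $\End(\F)$, so base change gives a canonical isomorphism $p_{T*}(\End(\F)\ox p_X^*\KK)\cong f_M^*W$.

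Under this isomorphism $\V$ becomes a rank-$k$ subbundle of $f_M^*W$, and the universal property of the Grassmann bundle produces a unique morphism $f\colon T\to\mathcal{G}$ with $\pi\circ f=f_M$ and $f^*\Ss=\V$. One then checks that $f$ pulls back $(\widetilde{\U},\Ss)$ to $(\F,\V)$ up to the equivalence defining the moduli functor, and that $f$ is unique with this property since each of the two preceding steps is itself unique. This shows that $\mathcal{G}$ represents the functor and is therefore a fine moduli space, while smoothness, irreducibility, and projectivity are inherited from Theorem \ref{ParaStable}. The main obstacle I anticipate is foundational rather than geometric: pinning down the correct notion of a \emph{family} of Higgs coherent systems over $T$---in particular requiring $\V$ to be a subbundle of the relative space of Higgs fields rather than a mere fibrewise collection of subspaces---and verifying that the formation $\F\mapsto p_{T*}(\End(\F)\ox p_X^*\KK)$ is compatible with base change, so that subbundles of $W$ correspond bijectively to such families. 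Once the functor is set up so that this compatibility holds, the representability argument above goes through without further difficulty.
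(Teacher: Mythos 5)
Your proposal is correct and follows the same architecture as the paper: realize the parameter space of Theorem \ref{ParaStable} as a Grassmann bundle over $M^s(n,d)$, take the pullback of the Poincar\'e bundle together with the tautological rank-$k$ subbundle as the candidate universal family, and deduce universality by chaining the universal property of $M^s(n,d)$ with that of the Grassmann bundle. The one genuine difference is how the rank-$(n^2(g-1)+1)$ bundle on $M^s(n,d)$ is produced. You take the zeroth direct image $p_{M*}(\End(\U)\ox p_X^*\KK)$ and justify that it is locally free and commutes with base change via Grauert, using that $h^0(\End(E)\ox\KK)=n^2(g-1)+1$ is constant on the stable locus (stability forces simplicity, so $h^0(\End(E))=1$). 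The paper instead builds its entire notion of family on $\R^{1^\vee}_{\E}=R^1\pi_{2*}(\End(\E))^\vee$, precisely to sidestep base-change failures of $R^0$ for general families, and then identifies this sheaf with $\Omega^1 M^s(n,d)$ via deformation theory ($T_E M^s(n,d)=H^1(X,\End(E))$). The two sheaves are canonically isomorphic by relative Serre duality together with the self-duality of $\End(\E)$, so the constructions agree; your route is more self-contained on the stable locus (no appeal to the cotangent-bundle identification), while the paper's choice of $R^{1\vee}$ is the one that generalizes to families whose fibres need not be simple. You correctly flag the foundational point that the moduli functor must be set up so that subbundles of the relative space of Higgs fields pull back compatibly; in the stable case your base-change argument closes that gap, so the proof goes through.
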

As a corollary of the above result, we obtain the moduli space of Higgs coherent systems of type $(1,d,k)$, previously studied in \cite{oscar}, as established in Corollary \ref{ModuliHcs1dk}.

The paper is organized as follows. In Section \ref{Sec1}, we introduce Higgs coherent systems of type $(n,d,k)$ over $X$ and their basic properties. Then, we introduce the concept of families parameterized by a scheme and set out the problem we are interested in; the moduli problem of Higgs coherent systems of type $(n,d,k)$ on $X$. Our main results are stated and proved in Section \ref{Sec3}. 

{\bf Notation:} We work over the field of complex numbers, denoted by $\bC$. Unless stated otherwise, by scheme we always mean a noetherian projective scheme of finite type over $\bC$; by a point, we always mean a closed point; and by a vector bundle, we always mean a holomorphic vector bundle. Let $X$ and $Y$ be two schemes, we denote by $X\times Y$ the fiber product $X\times_{\text{Spec}(\bC)}Y$. For a vector bundle $E$ over $X$, we denote by $\Deg(E)$ its degree, by $\Rk(E)$ its rank, and by  $h^i(E)$, the dimension of the cohomology group $H^i(X,E)$. We will write $H^i(E)$ instead of $H^i(X,E)$ when no confusion arises. Let $\alpha\colon F\to E$ be a morphism of vector bundles, we denote by $H^i(\alpha)\colon H^i(F) \to H^i(E)$ the induced morphism in cohomology groups. If $x$ is a point in $X$, we denote by $E_x$ the fiber of $E$ at $x$. Similarly, if $\E$ is a vector bundle over a product $X\times Y$ and $y$ is a point in $Y$, we denote by $\E_y$ the vector bundle over $X$ obtained by the restriction of $\E$ to $X\times \{y\}$. Let $f\colon X\to Y$ be a morphism of schemes, for $i\geq 0$, we denote by $R^if_{*}(-)$ the $i$-th right derived functor of the direct image functor $f_*(-)$.

\section{Moduli problem of Higgs coherent systems}\label{Sec1}
Let $X$ be a smooth connected complex projective curve of genus $g\geq 2$, and $\KK$ its canonical bundle. In this section, we introduce Higgs coherent systems on $X$ and study their main properties. Subsequently, we introduce the concept of families parameterized by a scheme and set out the problem of Higgs coherent systems on $X$.

\subsection{Higgs coherent systems}
Let $n,d,k$ be three integers, where $n,k\geq 1$. We begin with the definition of a Higgs coherent system of type $(n,d,k)$ on $X$. 
\begin{defi} 
\begin{em}
A {\bf \color{black}Higgs coherent system of type $(n,d,k)$} on $X$ is a pair $(E,V)$ consisting of a vector bundle $E$ on $X$ of rank $n$ and degree $d$, and a linear subspace $V$ of $H^0(X,\End(E)\otimes \KK)$ of dimension $k$. We will denote by {\bf \color{black}$H(n,d,k)$} the set of all Higgs coherent systems of type $(n,d,k)$ on $X$. 
\end{em}
\end{defi} 
We refer to the space $H^0(X,\End(E)\otimes \KK)$ as the twisted endomorphisms of $E$, or as the space of Higgs fields of $E$. The pair $(0, 0)$ is referred to as the trivial Higgs coherent system and sometimes will be denoted by $0: =(0, 0)$.

Let $E$ be a vector bundle of type $(n,d)$. By Serre's duality and Riemann-Roch's Theorem, we have that \begin{equation}\label{DimEndTor}
h^0(\End(E)\otimes \KK)=n^2(g-1)+h^0(\End(E)).
\end{equation}
Consequently, the dimension of the twisted endomorphisms of $E$ depends on the dimension of the space of automorphisms of $E$. In particular, if $L$ is a line bundle, it follows that $\End(L)\ox \KK=\KK$. Then, the space of twisted endomorphism of a line bundle is the space of holomorphic $1$-forms $H^0(X,\KK)$. In particular, $H(1,d,k)\neq \emptyset$ if and only if $k\leq g$. 

From now on, we assume that $n\geq 2$. The next result is related to the non-emptiness of $H(n,d,k)$. 

\begin{prop}\label{notempty}
For any type $(n,d,k)$, the set of Higgs coherent systems $H(n,d,k)$ is nonempty. Moreover, let $(E,V)$ be a Higgs coherent system in $H(n,d,k)$:
\begin{itemize}
\item[i.] if $k>n^2(g-2)+1$, then $E$ is not simple
\item[ii.] if $k>n^2(g-\frac{1}{2})-\frac{n}{2}+1$, then $E$ is decomposable or unstable
\end{itemize} 
\end{prop}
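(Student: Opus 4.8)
The plan is to reduce all three assertions to the single identity (\ref{DimEndTor}). Indeed, a Higgs coherent system of type $(n,d,k)$ with prescribed underlying bundle $E$ exists exactly when $H^0(\End(E)\otimes\KK)$ admits a $k$-dimensional subspace, i.e. when
$h^0(\End(E)\otimes\KK)=n^2(g-1)+h^0(\End(E))\ge k$; one then simply takes $V$ to be any such subspace. Thus non-emptiness becomes the problem of producing a bundle of type $(n,d)$ with a sufficiently large endomorphism algebra, while (i) and (ii) become upper bounds on $h^0(\End(E))$ under the respective hypotheses ``$E$ simple'' and ``$E$ indecomposable and semistable''.

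For non-emptiness I would use $n\ge 2$ to make the twisted endomorphisms arbitrarily abundant. Fix a point $x\in X$ and set $E_m=\Oc(mx)\oplus\Oc((d-m)x)\oplus\Oc^{\oplus(n-2)}$, a bundle of type $(n,d)$. The line bundle $\Oc((2m-d)x)$ occurs as a direct summand of $\End(E_m)$ and, by Riemann--Roch, has $h^0=2m-d+1-g$ for $m\gg 0$; hence $h^0(\End(E_m))\to\infty$, and by (\ref{DimEndTor}) we get $h^0(\End(E_m)\otimes\KK)\ge k$ once $m$ is large. Choosing any $k$-dimensional $V$ produces an element of $H(n,d,k)$.

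Statement (i) is then immediate: if $E$ is simple, then $h^0(\End(E))=1$, so (\ref{DimEndTor}) gives $h^0(\End(E)\otimes\KK)=n^2(g-1)+1$ and therefore $k=\Dim V\le n^2(g-1)+1$; the contrapositive is the claim. For (ii) I would assume $E$ is both indecomposable and semistable and bound $h^0(\End(E))$ in two steps. Semistability makes $\End(E)=E^\vee\otimes E$ a semistable bundle of degree $0$ (tensor products of semistable bundles are semistable over $\bC$), so $\End(E)\otimes\Oc(-x)$ is semistable of slope $-1<0$ and has no nonzero sections; consequently the fibre evaluation $\mathrm{ev}_x\colon\End(E)\to\End(E_x)\cong M_n(\bC)$ is injective. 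Indecomposability makes the finite-dimensional algebra $\End(E)$ local, with maximal ideal $\mathfrak{m}$ equal to its nilpotent part and $\End(E)/\mathfrak{m}=\bC$. Under $\mathrm{ev}_x$ the subspace $\mathfrak{m}$ maps isomorphically onto a linear subspace of $M_n(\bC)$ consisting of nilpotent matrices, so Gerstenhaber's theorem gives $\Dim\mathfrak{m}\le\binom{n}{2}$. Hence $h^0(\End(E))=1+\Dim\mathfrak{m}\le 1+\binom{n}{2}$, and (\ref{DimEndTor}) yields $k\le n^2(g-1)+1+\binom{n}{2}=n^2(g-\tfrac12)-\tfrac n2+1$; the contrapositive says $E$ is decomposable or unstable.

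The main obstacle is statement (ii), and it is exactly where the stated constant originates. Two inputs carry the argument: that semistability forces $\mathrm{ev}_x$ to be injective (through the vanishing of sections of the negative-slope twist of $\End(E)$), and that indecomposability identifies the non-scalar endomorphisms with a space of nilpotent matrices, so that Gerstenhaber's $\binom{n}{2}$ bound applies. Pinning down that $\mathfrak{m}$ is precisely the nilradical of the local algebra $\End(E)$ and that it embeds into nilpotent matrices is the conceptual crux; the remaining arithmetic leading to $n^2(g-\tfrac12)-\tfrac n2+1$ is routine bookkeeping via (\ref{DimEndTor}).
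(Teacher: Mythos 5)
Your proposal is correct, and for the non-emptiness and for part (i) it follows essentially the same route as the paper: non-emptiness via a decomposable bundle $\Oc_X(mx)\oplus\Oc_X((d-m)x)\oplus\Oc_X^{\oplus(n-2)}$ whose endomorphism bundle acquires a summand of large degree (the paper's $E_r$ is the same construction with $m=-r$), and (i) via $h^0(\End(E))=1$ for simple $E$ together with (\ref{DimEndTor}). Note that both your argument and the paper's actually prove the contrapositive of ``$k>n^2(g-1)+1\Rightarrow E$ not simple''; the constant $n^2(g-2)+1$ in the statement of item (i) is not what either proof delivers, and appears to be a slip in the proposition as stated rather than a defect of your argument.

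The genuine difference is in part (ii). The paper simply quotes the bound $h^0(\End(E))\le 1+\tfrac{n(n-1)}{2}$ for an indecomposable semistable bundle from an external reference, whereas you reprove it: semistability makes $\End(E)\otimes\Oc_X(-x)$ semistable of negative slope, hence without sections, so evaluation at $x$ embeds $H^0(\End(E))$ into $M_n(\bC)$; indecomposability makes the endomorphism algebra local with nilpotent maximal ideal $\mathfrak{m}$ and residue field $\bC$, so $\mathfrak{m}$ lands in a linear space of nilpotent matrices and Gerstenhaber's theorem gives $\dim\mathfrak{m}\le\binom{n}{2}$. This is a correct and self-contained derivation of the cited inequality (the only nontrivial external inputs being the locality of $\End$ of an indecomposable sheaf and Gerstenhaber's bound), and it makes transparent where the constant $n^2(g-\tfrac12)-\tfrac n2+1$ comes from; the paper's version is shorter but opaque at exactly that point. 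The arithmetic $n^2(g-1)+1+\binom{n}{2}=n^2(g-\tfrac12)-\tfrac n2+1$ checks out.
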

\begin{proof}
Let $E$ be vector bundle of type $(n,d)$, by (\ref{DimEndTor}) we have that 
$$h^0(\End(E)\otimes\KK)\geq n^2(g-1)+1.$$
If $k\leq n^2(g-1)+1$, always exists a linear subspace of dimension $k$ of $H^0(X,\End(E)\otimes\KK)$, and thus $H(n,d,k) \ne \emptyset.$ Now, if $k>n^2(g-1)+1,$ we can always produce a decomposable vector bundle $E$ such that $h^0(\End(E))$, and hence $h^0(\End(E)\ox \KK)$, is big enough. We proceed as follows: Let $r$ be an integer, and $x\in X$ a point, we consider the vector bundle of type $(n,d)$
$$E_r:=\mathcal{O}_X(-rx)\oplus \mathcal{O}_X((d+r)x)\oplus \mathcal{O}_X^{\oplus(n-2)}.$$
We have that
{\small \begin{align*}
\End(E_r)=&\mathcal{O}_X((d+2r)x)\oplus \mathcal{O}_X^{\oplus n-2}((d+r)x)\oplus\mathcal{O}_X^{\oplus n-2}(rx)\oplus \mathcal{O}_X^{\oplus (n-2)^2+2}\oplus \\
&\mathcal{O}_X((-d-2r)x) \oplus \mathcal{O}_X^{\oplus n-2}((-d-r)x)\oplus \mathcal{O}_X^{\oplus n-2}(-rx).
\end{align*}} 

Take $r>\max\{0,-d\}$, it follows that
{\small \begin{align*}
h^0(\End(E_r))&=(n-2)^2+2+h^0(\mathcal{O}_X((d+2r)x)+(n-2)[h^0(\mathcal{O}_X((d+r)x)+h^0(\mathcal{O}_X(rx))]\\
               &\geq (n-2)^2+2+ h^0(\mathcal{O}_X((d+2r)x)\\
               &\geq (n-2)^2+d+2r+3-g
\end{align*}} 

Therefore,
\begin{align*}
h^0(\End(E_r)\otimes \KK)&=n^2(g-1)+h^0(\End(E_r))\\
                          &\geq n^2(g-1)+(n-2)^2+d+2r+3-g\\
                          &\geq n^2(g-1)+d-g+2r.                    
\end{align*}
%
%
Take $r>\max\{0,-d,\frac{1-d+g}{2},\frac{k-n^2(g-1)-d+g}{2}\}$. Then, $h^0(\End(E_r)\otimes \KK)\geq k$, and hence there is a subspace $V_r$ of dimension $k$ of $H^0(X,\End(E_r)\otimes \KK)$, which implies that $(E_r,V_r)$ lie in $H(n,d,k)$. For the last part, if $E$ is a simple simple vector bundle, then $k\leq h^0(\End(E)\otimes \KK)=n^2(g-1)+1$. If $E$ is an indecomposable and semistable vector bundle, then $h^0(\End(E))\leq 1+\frac{n(n-1)}{2}$ (\cite[Proposition 1]{leticia}). Therefore, $k\leq h^0(\End(E)\ox \KK)\leq n^2(g-\frac{1}{2})-\frac{n}{2}+1$, which completes the  proof.
\end{proof}

\subsection{Morphisms of Higgs coherent systems}
Now we define a morphism between Higgs coherent systems. For this, first note that a morphism of vector bundles, $\alpha \colon F \to E$, induces the following morphisms
\begin{align*}
\alpha \otimes id_{F^\vee\otimes \KK} &\colon F \otimes F^\vee\otimes \KK \to E\otimes F^\vee\otimes \KK ~\text{and}\\
\alpha^\vee\otimes id_{E\otimes \KK}  &\colon E^\vee\otimes E\otimes \KK \to F^\vee\otimes E\ox \KK,
\end{align*}
and thus, we have the induced homomorphisms on the cohomology groups 
\begin{equation}\label{diagL}
\xymatrix@R=10mm{
                             &H^0(X,\End(E)\otimes K)\ar[d]^{\alpha_2}\\
H^0(X,\End(F)\otimes K)\ar[r]^{\alpha_1} &H^0(X,F^\vee\ox E\otimes K),
}
\end{equation}
where
\begin{align*}
\alpha_1(\psi)&=H^0(\alpha\ox id_{F^\vee\ox \KK})(\psi)=(\alpha\ox id_{F^\vee\ox \KK})\circ \psi, \text{~~~~for all}~\psi\in H^0(X,\End(F)\ox \KK)~\text{and} \\
\alpha_2(\varphi)&=H^0(\alpha^\vee \ox id_{E\ox \KK})(\varphi)=(\alpha^\vee \ox id_{E\ox \KK})\circ \varphi,\text{~~~~~~~~~~~~~~~~~~~~~~~~for all}~\varphi\in H^0(X,\End(E)\ox \KK).
\end{align*}
The above two homomorphisms share a common codomain, which allows us to relate subspaces of $H^0(X,\End(F)\otimes K)$ to subspaces of $H^0(X,\End(E)\otimes K)$. We use this fact to define a morphism between Higgs coherent systems.

\begin{defi}\label{schmorphism}
\begin{em}
A {\bf \color{black}morphism} of Higgs coherent systems, $h_\alpha \colon (F,W)\to (E,V)$, is a morphism $\alpha\colon F \to E$ such $\alpha_1(W)\subseteq \alpha_2(V)$. 
\end{em}
\end{defi}


\begin{obs}\label{obsuni}
\begin{em}
Let $(F,W)$ and $(E,V)$ be two Higgs coherent systems.
\begin{itemize}
\item[i.]  A morphism from the trivial Higgs coherent system $(0,0)$ to a Higgs coherent system $(E,V)$, by convention, consists of the inclusion in each component: $0\subset E$ and $0\subset V$. The morphism $(E,V) \to (0,0)$ means that $E$ maps to $0$ and $V$ maps to $0$
\item[ii.] Let $h_\alpha \colon (F,W)\to (E,V)$ be a morphism of Higgs coherent systems. For any $\psi\in H^0(X,\End(F)\otimes \KK)$, there exists $\varphi\in H^0(X,\End(E)\otimes \KK)$ such that $\alpha_1(\psi)=\alpha_2(\varphi)$. Since $H^0(X,\End(E)\ox \KK)=\Hom(E,E\ox \KK)$, any section $\varphi$ of $\End(E)\ox\KK$ correspond to a morphism $\varphi\colon E\to E\ox \KK$. Therefore, $\alpha_1(\psi)=\alpha_2(\varphi)$ if and only if the following diagram commutes: 
\begin{equation}\label{diag}
\xymatrix{
F\ar[rr]^{\alpha}\ar[d]_{\psi}\ar@{}[drr(.9)]|-{\circlearrowleft}& & E\ar[d]^{\varphi}
\\
F\otimes \KK \ar[rr]_{\alpha\otimes id_\KK}& &E\otimes \KK
}
\end{equation}


\item[iii.] A morphism $h_\alpha\colon (F,W)\to (E,V)$ is determined by the morphisms $(\alpha, \alpha_1, \alpha_2)$. In particular, if $\alpha\colon F\to E$ is injective, then $\alpha_1$ is injective. Analogously, if $\alpha\colon F\to E$ is surjective, then $\alpha_2$ is injective 
\item[iv.]  Let $h_\alpha \colon (F,W)\to (E,V)$ be a morphism of Higgs coherent systems such that $\alpha$ is surjective. The injectivity of $\alpha_2$ restricted to $V$, along with the condition \mbox{$\alpha_1(W)\subseteq \alpha_2(V)$} imply that the map 
\begin{align}\label{hatalfa}
\hat{\alpha}\colon H^0(X,\End(F)\ox \KK) &\dashrightarrow H^0(X,\End(E)\ox \KK)\\
\psi&\mapsto \alpha_2^{-1}(\alpha_1(\psi)),\nonumber
\end{align}
is well defined on $W$ and $\hat{\alpha}(W)\subseteq V$. In this case, $h_{\alpha}$ is written as $(\alpha, \alpha_1, \alpha_2,\hat{\alpha}),$ or simply $(\alpha,\hat{\alpha})$
\end{itemize}
\end{em}
\end{obs}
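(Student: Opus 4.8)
The plan is to treat items i--iv in turn: the first is a convention, and the remaining three follow by unwinding the three canonical identifications $H^0(\End(F)\ox\KK)=\Hom(F,F\ox\KK)$, $H^0(\End(E)\ox\KK)=\Hom(E,E\ox\KK)$ and $H^0(F^\vee\ox E\ox\KK)=\Hom(F,E\ox\KK)$, under which the formulas following \eqref{diagL} exhibit $\alpha_1$ as the post-composition $\psi\mapsto(\alpha\ox id_\KK)\circ\psi$ and $\alpha_2$ as the pre-composition $\varphi\mapsto\varphi\circ\alpha$. Item i requires no argument, since it merely fixes the convention for morphisms into and out of the trivial object $0=(0,0)$.

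For item ii, I would observe that, under the identifications above, $\alpha_1(\psi)$ and $\alpha_2(\varphi)$ are both elements of $\Hom(F,E\ox\KK)$: the first is the composite $F\to F\ox\KK\to E\ox\KK$ obtained by going down and then right around the square \eqref{diag}, while the second is the composite $F\to E\to E\ox\KK$ obtained by going right and then down. Hence the equality $\alpha_1(\psi)=\alpha_2(\varphi)$ in $\Hom(F,E\ox\KK)$ is, verbatim, the assertion that \eqref{diag} commutes, so the stated equivalence is immediate.

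For item iii, the first sentence is a tautology: $\alpha_1$ and $\alpha_2$ are determined by $\alpha$ through their defining formulas, so a morphism of Higgs coherent systems carries no data beyond $\alpha$. For the two injectivity claims I would argue on the level of bundle maps. If $\alpha$ is injective then so is $\alpha\ox id_\KK$, because tensoring a sheaf monomorphism with the line bundle $\KK$ preserves the vanishing of the kernel; consequently $(\alpha\ox id_\KK)\circ\psi=0$ forces $\psi=0$, and $\alpha_1$ is injective. Dually, if $\alpha$ is surjective then $\varphi\circ\alpha=0$ implies that $\varphi$ vanishes on the image of $\alpha$, which is all of $E$, whence $\varphi=0$ and $\alpha_2$ is injective.

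Finally, for item iv I would use that, when $\alpha$ is surjective, item iii makes $\alpha_2$ injective, so it restricts to a bijection onto its image and admits a well-defined inverse there. The subtlety, reflected by the dashed arrow in \eqref{hatalfa}, is that $\hat{\alpha}=\alpha_2^{-1}\circ\alpha_1$ is a priori only a partially defined map, namely on those $\psi$ for which $\alpha_1(\psi)$ lands in the image of $\alpha_2$; the defining condition of a morphism, $\alpha_1(W)\subseteq\alpha_2(V)$, is exactly what guarantees this on $W$. Indeed, for $\psi\in W$ we have $\alpha_1(\psi)\in\alpha_2(V)$, so $\alpha_2^{-1}(\alpha_1(\psi))$ is the unique $v\in V$ with $\alpha_2(v)=\alpha_1(\psi)$, which shows both that $\hat{\alpha}$ is well defined on $W$ and that $\hat{\alpha}(W)\subseteq V$. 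No step presents a genuine obstacle; the only care required is to keep track, in item ii, of which composite runs along each side of the square, and to note, in item iv, that $\hat{\alpha}$ becomes everywhere defined on $W$ precisely because of the morphism condition together with the injectivity of $\alpha_2$ supplied by item iii.
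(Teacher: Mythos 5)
Your proposal is correct and takes essentially the same route as the paper, which justifies this remark only through the brief inline observations you expand: identifying $H^0(X,\End(F)\ox\KK)$, $H^0(X,\End(E)\ox\KK)$ and $H^0(X,F^\vee\ox E\ox\KK)$ with the corresponding $\Hom$ spaces, so that $\alpha_1$ becomes post-composition with $\alpha\ox id_{\KK}$ and $\alpha_2$ pre-composition with $\alpha$; items ii--iv then fall out exactly as you argue, and your treatment of the dashed arrow in item iv is the intended one.

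One point deserves flagging: you silently skip the first sentence of item ii, which asserts that for \emph{any} $\psi\in H^0(X,\End(F)\ox\KK)$ there exists $\varphi$ with $\alpha_1(\psi)=\alpha_2(\varphi)$. As literally stated this is false, so no proof of it could succeed: take $F=\Oc_X(-p)\oplus\Oc_X$, $E=\Oc_X$, and $\alpha$ the projection onto the second summand. Writing $\psi=\bigl(\begin{smallmatrix} a & b \\ c & d\end{smallmatrix}\bigr)$ with $c\in \Hom(\Oc_X(-p),\KK)=H^0(X,\KK(p))$, one computes $\alpha_1(\psi)=(c,d)$ while the image of $\alpha_2$ consists of the maps $(0,\varphi)$ with $\varphi\in H^0(X,\KK)$; since $g\geq 2$ there exist $\psi$ with $c\neq 0$, and these admit no $\varphi$. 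This $\alpha$ does underlie a genuine morphism of Higgs coherent systems, e.g.\ with $W$ spanned by $\mathrm{diag}(0,d_0)$ and $V$ spanned by $d_0$ for some nonzero $d_0\in H^0(X,\KK)$, so the failure is not ruled out by the hypotheses. The claim is true, and trivially so, for $\psi\in W$: it is exactly the defining condition $\alpha_1(W)\subseteq\alpha_2(V)$, which is the reading the rest of the remark requires and which you yourself invoke, correctly, in your item iv. So your omission happens to coincide with a misstatement in the paper; a complete write-up should either restrict the quantifier to $\psi\in W$ and cite Definition \ref{schmorphism}, or record the counterexample above.
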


Let $h_\alpha\colon (F,W)\to (E,V)$ be a morphism of Higgs coherent systems. If $\alpha\colon F\to E$ is an isomorphism then $\alpha_1$ and $\alpha_2$ are injective, and thus the homomorphism in ($\ref{hatalfa}$) is well-defined for all Higgs field of $F$. We use these fact to define when the Higgs coherent systems $(F,W)$ and $(E,V)$ are isomorphic.

\begin{defi}\label{Defiso}
\em
Two Higgs coherent systems, $(F,W)$ and $(E,V)$, are {\bf \color{black}isomorphic systems} if there exists a morphism $h_\alpha \colon (F,W)\to (E,V)$ with $\alpha$ an isomorphism such that $\hat{\alpha}(W)=V$. In this case, we say that $h_\alpha$ is an {\bf \color{black}isomorphism} and we write $(F,W)\sim(E,V).$
\end{defi}

\begin{obs}
\em
Let $h_\alpha=(\alpha,\hat{\alpha})\colon (F,W)\to (E,V)$ be an isomorphism, from (\ref{hatalfa}), it follows that $\hat{\alpha}$ is an isomorphism and
\begin{equation}\label{finvectors}
\begin{split}
\hat{\alpha}=H^0(\alpha\ox\alpha^{-\vee}\ox id_{\KK})\colon H^0(X,\End(F)\otimes \KK)&\to H^0(X,\End(E)\otimes \KK)\\
\psi&\mapsto (\alpha\otimes \alpha^{-\vee}\ox id_\KK)\circ \psi. 
\end{split}
\end{equation}
In particular, $(F,W)$ and $(E,V)$ have the same type.
\end{obs}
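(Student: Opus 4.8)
The plan is to unwind the definition $\hat{\alpha}(\psi)=\alpha_2^{-1}(\alpha_1(\psi))$ from (\ref{hatalfa}) using the commutative-square characterization of Remark \ref{obsuni}(ii), and then to recognize the resulting conjugation as the purely tensorial map $\alpha\ox\alpha^{-\vee}\ox id_\KK$. First I would observe that when $\alpha$ is an isomorphism, both bundle maps $\alpha\ox id_{F^\vee\ox\KK}$ and $\alpha^\vee\ox id_{E\ox\KK}$ are isomorphisms of vector bundles (the first because $\alpha$ is, the second because $\alpha^\vee$ then is). Applying $H^0$, the two arrows $\alpha_1$ and $\alpha_2$ of diagram (\ref{diagL}) are \emph{linear isomorphisms} onto the common space $H^0(X,F^\vee\ox E\ox\KK)$, strengthening the injectivity noted before Definition \ref{Defiso}. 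Hence $\hat{\alpha}=\alpha_2^{-1}\circ\alpha_1$ is a composite of isomorphisms; this already shows that $\hat{\alpha}$ is an isomorphism and is defined on all of $H^0(X,\End(F)\ox\KK)$.

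For the explicit formula I would fix $\psi\in H^0(X,\End(F)\ox\KK)$ and set $\varphi:=\hat{\alpha}(\psi)$, so that $\alpha_1(\psi)=\alpha_2(\varphi)$. By Remark \ref{obsuni}(ii) this equality is equivalent to the commutativity of (\ref{diag}), that is $\varphi\circ\alpha=(\alpha\ox id_\KK)\circ\psi$; since $\alpha$ is invertible, $\varphi=(\alpha\ox id_\KK)\circ\psi\circ\alpha^{-1}$. It then remains to identify this conjugation with the map of (\ref{finvectors}). This is the standard identification of the conjugation action on $\End$ with the action of $\alpha\ox\alpha^{-\vee}$ on $F\ox F^\vee$: checking it on a decomposable local section $v\ox\xi$ of $\End(F)\cong F\ox F^\vee$, the endomorphism $w\mapsto\xi(w)v$ is carried by conjugation to $u\mapsto\xi(\alpha^{-1}u)\,\alpha(v)$, which is exactly the endomorphism attached to $(\alpha\ox\alpha^{-\vee})(v\ox\xi)$, because $\alpha^{-\vee}\xi=\xi\circ\alpha^{-1}$. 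Since $\KK$ is carried along by $id_\KK$, tensoring with $id_\KK$ and passing to global sections gives $\hat{\alpha}=H^0(\alpha\ox\alpha^{-\vee}\ox id_\KK)$, as asserted in (\ref{finvectors}).

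Finally, the claim about types follows at once. As $h_\alpha$ is an isomorphism of Higgs coherent systems, Definition \ref{Defiso} gives $\hat{\alpha}(W)=V$, and since $\hat{\alpha}$ is a linear isomorphism this forces $\dim V=\dim W=k$. Meanwhile $\alpha\colon F\to E$ being an isomorphism of vector bundles forces $\Rk(F)=\Rk(E)=n$ and $\Deg(F)=\Deg(E)=d$. Therefore $(F,W)$ and $(E,V)$ share the type $(n,d,k)$.

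I expect the only delicate point to be the bookkeeping in the second paragraph: keeping the dual $\alpha^{-\vee}$ and the canonical twist in their correct tensor slots while translating between the ``composition of bundle maps'' description of $\hat{\alpha}$ coming from (\ref{diag}) and the ``tensor the section'' description of $H^0(\alpha\ox\alpha^{-\vee}\ox id_\KK)$. Everything else is formal once $\alpha_1,\alpha_2$ are seen to be isomorphisms.
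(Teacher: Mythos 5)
Your argument is correct and is exactly the unwinding the paper intends: the remark is asserted without proof as a consequence of (\ref{hatalfa}), and your steps (both $\alpha_1,\alpha_2$ become isomorphisms on $H^0$, the commutative square (\ref{diag}) gives $\varphi=(\alpha\ox id_\KK)\circ\psi\circ\alpha^{-1}$, and the decomposable-section check identifies this conjugation with $H^0(\alpha\ox\alpha^{-\vee}\ox id_\KK)$) supply precisely the missing details. The type comparison at the end is likewise correct.
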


\subsection{Families of Higgs coherent systems}
In what follows, we introduce the notion of how Higgs coherent systems deform. That is, we define the concept of families with which we will work. Roughly speaking, a family of Higgs coherent systems parameterized by a scheme $S$ is a pair $(\E,\V)$ where $\E$ is a family of vector bundles parameterized by $S$ and $\V$ is a sub-bundle of rank $k$ of a sheaf on $S$, whose fiber at each $s$ in $S$ is precisely $H^0(X,\End(\E_s)\otimes \KK)$.

Let $\mathcal{E}$ be a family of vector bundles of type $(n,d)$ parameterized by a scheme $S$. For any point $s$ in $S$, we are interested in the subspaces of $H^0(X,\End(\E_s)\otimes \KK)$, so it is natural to consider the direct image $R^0\pi_{2^*}(\End(\mathcal{E})\ox \pi_X^*(\KK))$, but in general the fibers of this sheaf are not equal to $H^0(X,\End(\E_s)\otimes \KK)$ (see \cite[Corollary 4.2.9]{potier}). Instead of $R^0\pi_{2*}(\End(\mathcal{E})\ox \pi_X^*(\KK))$, we consider the first direct image $R^1\pi_{2^*}(\End(\mathcal{E}))$. That is, 
$$\xymatrix{
\End(\mathcal{E})\ar[d]&  R^1\pi_{2^*}(\End(\mathcal{E}))\ar[d]
\\
X\times S\ar[r]^{\pi_2}& S.
}$$
We define $\mathcal{R}^{1^\vee}_{\mathcal{E}}:=R^1\pi_{2^*}(\End(\mathcal{E}))^\vee$. The following result proves that a locally free sub-sheaf of $\mathcal{R}^{1^\vee}_{\mathcal{E}}$ can be regarded as a family of subspaces of twisted endomorphisms of the fibers of $\mathcal{R}^{1^\vee}_{\mathcal{E}}$.

\begin{lema} \label{fibre}
Let $\V$ be a sub-bundle of rank $k$ of $\mathcal{R}^{1^\vee}_{\mathcal{E}}$. For any $s$ in $S$, it follows that $\V_s$ is a $k$-dimensional subspace of $H^0(X,\End(\E_s)\otimes \KK)$.
\end{lema}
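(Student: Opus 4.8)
The plan is to compute the fiber of $\mathcal{R}^{1^\vee}_{\mathcal{E}}$ at an arbitrary point $s\in S$ and identify it with $H^0(X,\End(\E_s)\ox\KK)$; once this is done, the hypothesis that $\V$ is a sub-bundle of rank $k$ immediately produces the desired $k$-dimensional subspace. Write $\mathcal{G}:=R^1\pi_{2*}(\End(\mathcal{E}))$, so that $\mathcal{R}^{1^\vee}_{\mathcal{E}}=\mathcal{G}^\vee$, and for $s\in S$ denote by $\mathcal{G}_s:=\mathcal{G}\ox k(s)$ the fiber.

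First I would set up cohomology and base change for the proper morphism $\pi_2\colon X\times S\to S$ and the sheaf $\End(\mathcal{E})=\mathcal{E}\ox\mathcal{E}^\vee$, which is locally free and hence flat over $S$. The decisive observation is that the fibers of $\pi_2$ are isomorphic to the curve $X$, so they have dimension $1$ and $R^i\pi_{2*}(\End(\mathcal{E}))=0$ for $i\geq 2$. Consequently the base-change homomorphism
$$\mathcal{G}_s=R^1\pi_{2*}(\End(\mathcal{E}))\ox k(s)\longrightarrow H^1(X,\End(\E_s))$$
is a map in the top relative degree whose successor vanishes; by Grothendieck's base-change theorem it is surjective, and since a surjective base-change map is an isomorphism, it is an isomorphism for every $s\in S$. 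This identifies $\mathcal{G}_s\cong H^1(X,\End(\E_s))$.

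Next I would dualize and invoke Serre duality on $X$. Passing to duals gives $(\mathcal{R}^{1^\vee}_{\mathcal{E}})_s\cong(\mathcal{G}_s)^\vee\cong H^1(X,\End(\E_s))^\vee$, and Serre duality together with the self-duality $\End(\E_s)^\vee\cong\End(\E_s)$ -- the same two ingredients already used to derive (\ref{DimEndTor}) -- yields
$$H^1(X,\End(\E_s))^\vee\cong H^0\!\big(X,\End(\E_s)^\vee\ox\KK\big)=H^0(X,\End(\E_s)\ox\KK).$$
Hence $(\mathcal{R}^{1^\vee}_{\mathcal{E}})_s\cong H^0(X,\End(\E_s)\ox\KK)$. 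Finally, since $\V$ is locally free of rank $k$ its fiber $\V_s$ has dimension $k$, and the sub-bundle inclusion $\V\hookrightarrow\mathcal{R}^{1^\vee}_{\mathcal{E}}$ remains injective on fibers; composing with the isomorphism above realizes $\V_s$ as a $k$-dimensional subspace of $H^0(X,\End(\E_s)\ox\KK)$, as claimed.

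The main obstacle is the base-change step, and the subtle point to keep in view is that $\mathcal{G}=R^1\pi_{2*}(\End(\mathcal{E}))$ need not be locally free for a general family $\mathcal{E}$: since $h^0(\End(\E_s))$ can jump, so can $h^1(\End(\E_s))$ by Riemann--Roch, and one cannot appeal to constancy of rank. This is precisely why I would rely on the top-degree base-change isomorphism, which holds pointwise independently of local freeness, rather than on semicontinuity; and why the identification of the fiber of the dual, together with the fiberwise injectivity of the inclusion, is best argued through the locally free sub-bundle $\V$ itself rather than through the possibly ill-behaved sheaf $\mathcal{G}^\vee$ globally.
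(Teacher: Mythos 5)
Your argument is essentially the paper's own proof: the paper likewise uses $R^2\pi_{2*}(\End(\mathcal{E}))=0$ (as $X$ is a curve) together with cohomology and base change in top degree to identify $(R^1\pi_{2*}(\End(\mathcal{E})))_s$ with $H^1(X,\End(\mathcal{E}_s))$, then dualizes and applies Serre duality to get $H^0(X,\End(\mathcal{E}_s)\otimes\omega_X)$, and finally uses the sub-bundle hypothesis on $\mathcal{V}$. Your closing remark about $R^1\pi_{2*}(\End(\mathcal{E}))$ not being locally free in general (so that the fiber of the dual need not a priori equal the dual of the fiber) is a subtlety the paper dispatches only by citation, and your suggestion to route the fiberwise statement through the locally free $\mathcal{V}$ is a reasonable refinement, but the overall strategy is the same.
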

\begin{proof}
Let $\V$ be a locally free subsheaf of rank $k$ of $\mathcal{R}^{1^\vee}_{\mathcal{E}}$ and $s$ a point in $S$. It follows that $R^2\pi_{2^*}(\End(\E))=0$, because $X$ is a curve. Since the projection $\pi_S$ is proper, by \cite[Theorem 12.11]{hartshorne}  or \cite[Corollary 3]{mumfordabe}, we have that 
$$(R^1\pi_{2^*}(\End(\E)))_s= H^1(X,\End(\E_s)).$$
Now, \cite[Proposition 6.8]{hartshorne} and Serre's Duality imply that
\begin{align*}
(\mathcal{R}^{1^\vee}_{\mathcal{E}})_s&=\Hom(R^1\pi_{2*}(\End(\mathcal{E})),\mathcal{O}_S)_s\\
                             &=\Hom(H^1(X,\End(\E_s)),\mathbb{C})\\
                             &=H^0(X,\End(\E_s)\otimes \KK)
\end{align*}
Therefore, for all $s$ in $S$, $\V_s$ is a $k$-dimensional subspace of $H^0(X,\End(\E_s)\otimes \KK)$, as claimed.
\end{proof}
Let $\E$ and $\V$ as in the above result, for all $s$ in $S$, the pair $(\E_s,\V_s)$ is a Higgs coherent system of type $(n,d,k)$. This motives the following definition.
\begin{defi}
\begin{em}
A {\bf \color{black}family of Higgs coherent systems} of type $(n,d,k)$ parameterized by a scheme $S$ is a pair $(\mathcal{E},\mathcal{V})$, where  $\mathcal{E}$ is a family of vector bundles of type $(n,d)$ parameterized by $S$ and $\mathcal{V}$ is a sub-bundle of rank $k$ of $\mathcal{R}^{1^\vee}_{\mathcal{E}}$. Two families parameterized by $S$, $(\mathcal{E},\mathcal{V})$ and $(\mathcal{E}',\mathcal{V}')$, are said to be {\bf \color{black}equivalent} if, for all point $s$ in $S$, the Higgs coherent systems $(\mathcal{E}_s,\mathcal{V}_s)$ and $(\mathcal{E}'_s,\mathcal{V}'_s)$ are isomorphic. In this case, we write $(\mathcal{E},\mathcal{V})\approx (\mathcal{E}',\mathcal{V}')$.
\end{em}
\end{defi}

To define the notion of induced family by a morphism of schemes, we will use the following result.

\begin{lema}\label{flat_to_pull}
Let $(\E,\V)$ be a family of Higgs coherent systems of type $(n,d,k)$ parameterized by $S$. Let $\phi\colon S'\to S$ be a flat morphism. The pair $(\phi^*(\mathcal{E}),\phi^*(\mathcal{V}))$ is a family of Higgs coherent systems of type $(n,d,k)$ parameterized by $S'$.
\end{lema}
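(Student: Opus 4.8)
The plan is to check the two defining properties of a family of Higgs coherent systems for the pair $(\phi^*\E,\phi^*\V)$ over $S'$, with the flat base change theorem carrying the essential content. Throughout, write $\Phi:=id_X\times\phi\colon X\times S'\to X\times S$ for the induced morphism, so that $\phi^*\E$ abbreviates $\Phi^*\E$, and let $p_2\colon X\times S'\to S'$ be the second projection; the square formed by $\Phi,\phi,\pi_{2}$ and $p_2$ is Cartesian.

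First I would verify that $\phi^*\E$ is a family of vector bundles of type $(n,d)$ parameterized by $S'$. The pullback of a locally free sheaf is locally free, so $\Phi^*\E$ is a vector bundle; it is flat over $S'$ since flatness over the base is preserved under base change. For each $s'\in S'$ with $s=\phi(s')$, compatibility of pullback with restriction to fibres gives $(\phi^*\E)_{s'}\cong\E_s$, which is a vector bundle of type $(n,d)$. Hence $\phi^*\E$ is a family of vector bundles of type $(n,d)$ over $S'$.

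The core step is to identify $\phi^*\bigl(\mathcal{R}^{1^\vee}_{\E}\bigr)$ with $\mathcal{R}^{1^\vee}_{\phi^*\E}$. Since $\Phi^*\End(\E)=\End(\Phi^*\E)=\End(\phi^*\E)$ and $\phi$ is flat, the flat base change theorem applied to the Cartesian square yields a canonical isomorphism
\begin{equation*}
\phi^*\bigl(R^1\pi_{2^*}(\End(\E))\bigr)\;\cong\;R^1 p_{2^*}\bigl(\End(\phi^*\E)\bigr).
\end{equation*}
Dualizing, and using that $\phi^*$ commutes with the dual of a locally free sheaf, gives $\phi^*\bigl(\mathcal{R}^{1^\vee}_{\E}\bigr)\cong\mathcal{R}^{1^\vee}_{\phi^*\E}$. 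This is precisely where the flatness of $\phi$ is indispensable: without it the base-change morphism need not be an isomorphism.

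Finally I would transport the sub-bundle $\V\hookrightarrow\mathcal{R}^{1^\vee}_{\E}$ across this isomorphism. Because $\V$ is a sub-bundle of rank $k$, the quotient $\mathcal{R}^{1^\vee}_{\E}/\V$ is locally free, yielding a short exact sequence of locally free sheaves. Applying $\phi^*$, which is exact exactly because $\phi$ is flat, preserves both exactness and local freeness, so $\phi^*\V$ is a sub-bundle of $\phi^*(\mathcal{R}^{1^\vee}_{\E})\cong\mathcal{R}^{1^\vee}_{\phi^*\E}$ of rank $k$. Together with the first step this exhibits $(\phi^*\E,\phi^*\V)$ as a family of Higgs coherent systems of type $(n,d,k)$ parameterized by $S'$. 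The main obstacle is the flat base change identification in the core step; once that isomorphism is in hand, the remaining verifications are formal.
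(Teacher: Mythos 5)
Your proof is correct and follows essentially the same route as the paper: identify $\R^{1^\vee}_{\phi^*(\E)}$ with $\phi^*(\R^{1^\vee}_{\E})$ by pulling $\phi^*$ through $R^1\pi_{2^*}$ (flat base change) and through the dual, then transport the inclusion of $\V$ using exactness of flat pullback. One small caveat: $R^1\pi_{2^*}(\End(\E))$ need not be locally free (its fibres $H^1(X,\End(\E_s))$ have dimension $n^2(g-1)+h^0(\End(\E_s))$, which can jump), so in the dualizing step you should invoke the fact that flat pullback commutes with the dual of a \emph{coherent} sheaf --- which is exactly what the paper cites --- rather than the version for locally free sheaves, and likewise the local freeness of the quotient $\R^{1^\vee}_{\E}/\V$ is not needed, since flatness of $\phi$ alone preserves the injectivity of $\V\hookrightarrow\R^{1^\vee}_{\E}$.
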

\begin{proof}
By definition, $\phi^*(\mathcal{E})=(id_X\times \phi)^*(\mathcal{E})$, which is a family of vector bundles of type $(n,d)$ parameterized by $S'$. It is enough to prove that $\phi^*(\mathcal{V})$ is a sub-sheaf of $\R^{1^\vee}_{\phi^*(\E)}$.
Since pull-back commutes with tensor product, and using the projection formula (\cite[Proposition 2.32]{Qing}), we have that 
$$\R^{1^\vee}_{\phi^*(\E)}=R^1\pi_{2^*}(\End(\phi^*(\E)))^\vee=R^1\pi_{2^*}(\phi^*(\End(E)))^\vee=(\phi^*(R^1\pi_{2^*}(\End(E))))^\vee.$$
It is known that pull-back of a coherent sheaf under a flat morphism commutes with taking dual \cite[Proposition 1.8]{hartReflexive}, hence
\begin{align*}
\R^{1^\vee}_{\phi^*(\E)}&=\phi^*(R^1\pi_{2^*}(\End(E)))^\vee)\\
				        &=\phi^*(\R^{1^\vee}_\E)
\end{align*}
Finally, from the inclusion $0\to\V\to \R^{1^\vee}_\E$, since pull-back by a flat morphism preserve injectivity (\cite[Proposition 14.11]{urlich}), it follows that \mbox{$0\to\phi^*(\V)\to \phi^*(\R^{1^\vee}_\E)=\R^{1^\vee}_{\phi^*(\E)}$}, which completes the proof.
\end{proof}

Let $\phi\colon S'\to S$ be a morphism and let $(\mathcal{E},\mathcal{V})$ be a family of Higgs coherent systems of type $(n,d,k)$ parameterized by $S$, we define {\bf the induced family by $\phi$}, denoted as $\phi^*(\mathcal{E},\mathcal{V})$, to be $((id_X\times \phi)^*(\mathcal{E}),\phi^*(\mathcal{V}))$. 

Note that to give a family of subspaces of the spaces of Higgs fields of dimension $k$ is equivalent to give a family of quotients of dimension $k$ of the dual space of the space of Higgs fields. Therefore, a family of Higgs coherent systems of type $(n,d,k)$ parameterized by a scheme, $(\E, \V)$, is equivalent to a pair $(\E,\mathcal{Q})$, where $\mathcal{Q}$ is a locally free quotient of rank $k$ of $R^1\pi_{2^*}(\End(\mathcal{E}))$. Since the pull-back is a right-exact functor, the assumption of flatness on the morphisms in Lemma \ref{flat_to_pull} can be dropped if, as family, we consider $(\E,\V^\vee)$ instead of $(\E,\V)$. Under this equivalence, the notion of induced family satisfies the following functorial properties (cf. \cite[Conditions 1.4 (c)]{newstead}), which allows to define the moduli problem for Higgs coherent systems.

\begin{obs}\label{induceproper}
\begin{em}
Let $\phi'\colon S''\to S'$ and $\phi\colon S'\to S$ be two morphisms of schemes and $(\mathcal{E},\mathcal{V})$ and $(\mathcal{E}',\mathcal{V}')$ be two families parameterized by $S$. Then
\begin{itemize}
\item[i.] $(\phi \circ \phi')^{*}((\mathcal{E},\mathcal{V}))=\phi '^{*} \circ \phi ^{*}((\mathcal{E},\mathcal{V}))$ 
\item[ii.]$id_S^{*}((\mathcal{E},\mathcal{V}))= (\mathcal{E},\mathcal{V})$
\item[iii.]$(\mathcal{E},\mathcal{V})\approx \mathcal{(\mathcal{E}',\mathcal{V}')} \Rightarrow \phi^{*}\mathcal{(\mathcal{E},\mathcal{V})} \approx \phi^{*}(\mathcal{E}',\mathcal{V}')$
\end{itemize} 
\end{em}
\end{obs}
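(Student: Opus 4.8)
The plan is to reduce all three assertions to the functoriality of the pull-back of sheaves, together with the base-change identity $\phi^*(\R^{1^\vee}_\E)=\R^{1^\vee}_{\phi^*(\E)}$ already extracted in the proof of Lemma \ref{flat_to_pull}. I would work throughout with the equivalent quotient description $(\E,\V^\vee)$ flagged before the statement, so that every operation below is right-exact and the identities make sense for arbitrary (not necessarily flat) morphisms.

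For (i), I would first use the identity of morphisms of schemes
$$id_X\times(\phi\circ\phi')=(id_X\times\phi)\circ(id_X\times\phi'),$$
which gives at once $(id_X\times(\phi\circ\phi'))^*(\E)=(id_X\times\phi')^*(id_X\times\phi)^*(\E)$ for the bundle component, since pull-back of coherent sheaves is functorial. For the subspace component I would invoke the base-change isomorphism of Lemma \ref{flat_to_pull} twice, once for $\phi$ and once for $\phi'$, and check that the two canonical isomorphisms $\phi'^*\phi^*(\R^{1^\vee}_\E)\cong\R^{1^\vee}_{(\phi\circ\phi')^*(\E)}$ so obtained agree; this is the usual cocycle compatibility of the composite of the projection-formula and base-change isomorphisms, and it carries the subbundle $\V$ (resp. the quotient $\V^\vee$) along. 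Statement (ii) is the degenerate case: since $id_X\times id_S=id_{X\times S}$, one has $(id_X\times id_S)^*(\E)=\E$ and $id_S^*(\V)=\V$, so the induced family is literally the original one.

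For (iii), the key observation is that taking the fiber of an induced family at a point $s'\in S'$ recovers the fiber of the original family at the image point $\phi(s')\in S$. Indeed, $id_X\times\phi$ restricts to an isomorphism $X\times\{s'\}\xrightarrow{\sim}X\times\{\phi(s')\}$ that is the identity on $X$, whence $(\phi^*(\E))_{s'}\cong\E_{\phi(s')}$; combining the fiber computation of Lemma \ref{fibre} with the base-change identity of Lemma \ref{flat_to_pull} gives the matching identification $(\phi^*(\V))_{s'}\cong\V_{\phi(s')}$ as subspaces of $H^0(X,\End(\E_{\phi(s')})\ox\KK)$. Thus the fiber of $\phi^*(\E,\V)$ at $s'$ is isomorphic, as a Higgs coherent system, to $(\E_{\phi(s')},\V_{\phi(s')})$, and likewise for $(\E',\V')$. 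Applying the hypothesis $(\E,\V)\approx(\E',\V')$ at the point $\phi(s')$ yields $(\E_{\phi(s')},\V_{\phi(s')})\sim(\E'_{\phi(s')},\V'_{\phi(s')})$, and transporting this isomorphism through the two fiber identifications shows the fibers of $\phi^*(\E,\V)$ and $\phi^*(\E',\V')$ at $s'$ are isomorphic. As $s'$ is arbitrary, this is exactly $\phi^*(\E,\V)\approx\phi^*(\E',\V')$.

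The main obstacle I expect is bookkeeping rather than conceptual: pull-back is functorial only up to canonical isomorphism, so the equalities in (i)–(ii) must be read through fixed natural identifications, and in (i) one must verify that the base-change isomorphisms for $\phi$, $\phi'$ and $\phi\circ\phi'$ are mutually compatible and that they respect the subbundle/quotient structure defining $\V$. Checking this coherence, together with the compatibility of the fiberwise identifications in (iii) with the base-change isomorphism of Lemma \ref{flat_to_pull}, is the delicate part; once it is in place, all three statements follow formally.
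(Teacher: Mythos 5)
Your argument is correct, but note that the paper offers no proof of this remark at all: it is stated as an observation, justified only by the preceding sentence about passing to the quotient description $(\E,\V^\vee)$ so that pull-back needs no flatness hypothesis, and by the citation of Newstead's Conditions 1.4(c). Your verification --- functoriality of $(id_X\times\phi)^*$ for (i) and (ii), and the pointwise nature of the equivalence $\approx$ combined with the fiber identifications $(\phi^*(\E))_{s'}\cong\E_{\phi(s')}$ and $(\phi^*(\V))_{s'}\cong\V_{\phi(s')}$ for (iii) --- is exactly the implicit argument, and you correctly flag the only delicate points: the coherence of the canonical base-change isomorphisms in (i), and the fact that the fiber identification of $\V$ rests on the same cohomology-and-base-change facts used in Lemma \ref{fibre}. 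Nothing is missing relative to what the paper itself supplies.
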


We define the moduli problem of Higgs coherent systems of type $(n,d,k)$ over $X$ as follows:
\begin{equation}\label{ModProblem}
\fbox{$(H(n,d,k),\sim, (\E,\V),\approx).$}
\end{equation}
To construct the moduli space for the above problem, it is necessary to endow $H(n,d,k)/\sim$ with a geometric-algebraic structure that reflects how Higgs coherent systems deform in families. In the next section, we construct a projective scheme which is a fine moduli space for the moduli problem of Higgs coherent systems whose underlying vector bundles are stable.
\section{Moduli of Higgs coherent systems with stable underlying vector bundles}\label{Sec3}
In this section, we study Higgs coherent systems with underlying stable vector bundles and construct the fine moduli space for the corresponding moduli problem. 

Let $(E,V)$ be a Higgs coherent system of type $(n,d,k)$, we define
$$H_E(V)=\{V'\in \Grass(k,H^0(X,\End(E)\ox\KK)~|~(E,V)\sim(E,V')\}.$$
The next result is related to the isomorphism classes of Higgs coherent systems $(E,V)$, where $E$ is a simple vector bundle.
\begin{prop}\label{simpleclass}
Let $(E,V)$ be a Higgs coherent systems of type $(n,d,k)$ such that $E$ is a simple vector bundle. Then, 
$H_E(V)=\{V\}.$
\end{prop}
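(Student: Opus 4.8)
The plan is to prove that any Higgs coherent system isomorphic to $(E,V)$ with the \emph{same} underlying bundle $E$ must carry the same subspace, i.e.\ $V' = V$ whenever $(E,V) \sim (E,V')$. The decisive input is the simplicity of $E$, which forces its automorphism group to be as small as possible. First I would unwind the definition of $H_E(V)$: an element $V'$ lies in $H_E(V)$ precisely when there is an isomorphism $h_\alpha \colon (E,V) \to (E,V')$, that is, an automorphism $\alpha \colon E \to E$ with $\hat{\alpha}(V) = V'$. By (\ref{finvectors}), the induced isomorphism on Higgs fields is explicitly $\hat{\alpha} = H^0(\alpha \otimes \alpha^{-\vee} \otimes id_{\KK})$, sending $\psi \mapsto (\alpha \otimes \alpha^{-\vee} \otimes id_{\KK}) \circ \psi$.

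Next I would invoke simplicity. Since $E$ is simple, $\End(E) = \bC \cdot id_E$, so every automorphism of $E$ has the form $\alpha = \lambda\, id_E$ with $\lambda \in \bC^{*}$. Then $\alpha^{-\vee} = (\lambda^{-1} id_E)^{\vee} = \lambda^{-1} id_{E^\vee}$, and consequently $\alpha \otimes \alpha^{-\vee} = \lambda\,\lambda^{-1}\, id_{E \otimes E^\vee} = id_{\End(E)}$. Tensoring with $id_{\KK}$ and passing to $H^0$ yields $\hat{\alpha} = id$ on $H^0(X, \End(E) \otimes \KK)$.

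Finally, $V' = \hat{\alpha}(V) = V$ for every $V' \in H_E(V)$, which gives $H_E(V) = \{V\}$, as claimed. I do not anticipate any genuine obstacle here: the entire argument rests on the single observation that a scalar automorphism acts trivially by \textquotedblleft conjugation\textquotedblright\ on twisted endomorphisms, so the only point requiring care is to verify, through the explicit formula (\ref{finvectors}), that the two scalar factors $\lambda$ and $\lambda^{-1}$ cancel. The statement is thus essentially a reformulation of the fact that the automorphism group $\bC^{*}$ of a simple bundle acts trivially on $\End(E) \otimes \KK$.
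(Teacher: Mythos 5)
Your proposal is correct and follows essentially the same route as the paper: simplicity forces $\alpha = \lambda\, id_E$, the scalar and its inverse cancel in $\alpha \otimes \alpha^{-\vee}$, so $\hat{\alpha}$ is the identity on $H^0(X,\End(E)\otimes \KK)$ and $V' = V$. No gaps.
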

\begin{proof}
Let $V'$ a subspace of $H^0(X,\End(E)\ox\KK)$ and suppose that $(E,V)\sim (E,V')$. There exists an isomorphism $\alpha\colon E \to E$ such that $\hat{\alpha}(V)=V'$. Since $E$ is simple, there exits $c\in \bC^*$ such that $\alpha=c\cdot id_E$. It follows that $\hat{\alpha}(V)=(\alpha\otimes \alpha^{-\vee}\ox id_\KK)(V)=(c\cdot id_E\otimes c^{-1}\cdot id_{E^\vee}\ox id_\KK)(V)=V$. Therefore, $V=V'$, which completes the proof.  
\end{proof}

\begin{obs}\label{simpleiso}
\em
Let $(E,V)$ and $(E',V')$ to Higgs coherent systems of type $(n,d,k)$ such that $E$ and $E'$ are simple vector bundles. From Proposition \ref{simpleclass} and the fact that if $E\cong E'$, then $H^0(X,\Hom(E,E'))=\mathbb{C}$, it follows that $(E,V)\sim(E,V')$ if and only if $V=V'$. The equality $V=V'$ means that $V$ and $V'$ are equal as subspaces of $H^0(X,E^\vee\otimes E'\otimes \KK)$.
\end{obs}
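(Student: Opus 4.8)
The plan is to run the scalar-cancellation argument of Proposition \ref{simpleclass} with two a priori distinct, but isomorphic, underlying bundles. First I would record that being isomorphic as Higgs coherent systems forces the underlying bundles to be isomorphic: an isomorphism $h_\alpha\colon(E,V)\to(E',V')$ in the sense of Definition \ref{Defiso} supplies in particular an isomorphism $\alpha\colon E\to E'$, so $E\cong E'$. Hence there is nothing to prove unless $E\cong E'$, and I assume this from now on, fixing once and for all one isomorphism $\beta\colon E\to E'$.

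The key step is to show that \emph{every} isomorphism $\alpha\colon E\to E'$ induces the same map on Higgs fields as $\beta$. Since $E$ and $E'$ are simple and isomorphic, composing with $\beta^{-1}$ identifies $H^0(X,\Hom(E,E'))$ with $H^0(X,\End(E))=\bC$, so $\beta^{-1}\alpha$ is a scalar $c\in\bC^{*}$ and $\alpha=c\,\beta$. Feeding this into the explicit formula $\hat\alpha=H^0(\alpha\ox\alpha^{-\vee}\ox id_{\KK})$ from (\ref{finvectors}) and using $(c\beta)\ox(c\beta)^{-\vee}=c\,\beta\ox c^{-1}\beta^{-\vee}=\beta\ox\beta^{-\vee}$, the scalars cancel exactly as in the proof of Proposition \ref{simpleclass}, so $\hat\alpha=\hat\beta$ for every isomorphism $\alpha$.

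With this in hand both implications are immediate. If $(E,V)\sim(E',V')$, then by Definition \ref{Defiso} some isomorphism $\alpha$ satisfies $\hat\alpha(V)=V'$, whence $\hat\beta(V)=V'$; conversely, if $\hat\beta(V)=V'$ then $\beta$ itself realizes $(E,V)\sim(E',V')$. Thus $(E,V)\sim(E',V')$ if and only if $\hat\beta(V)=V'$. Finally, $\hat\beta$ is precisely the (now choice-independent) identification under which $V\subset H^0(X,\End(E)\ox\KK)$ and $V'\subset H^0(X,\End(E')\ox\KK)$ are both regarded as subspaces of $H^0(X,E^\vee\ox E'\ox\KK)$; under it the relation $\hat\beta(V)=V'$ is exactly the equality $V=V'$ asserted in the statement.

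I expect the only delicate point to be the \emph{canonicity} of this identification: one must check that comparing $V$ and $V'$ inside $H^0(X,E^\vee\ox E'\ox\KK)$ does not secretly depend on the chosen $\beta$. This is guaranteed by the computation above, namely the one-dimensionality of $\Hom(E,E')$ together with the cancellation of scalars in $\alpha\ox\alpha^{-\vee}$ — the same mechanism that drives Proposition \ref{simpleclass}. Once this is in place, no further computation is required.
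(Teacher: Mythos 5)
Your argument is correct and is essentially the paper's own (the remark just cites Proposition \ref{simpleclass} together with $H^0(X,\Hom(E,E'))=\bC$, which is exactly the reduction $\alpha=c\beta$ and scalar cancellation in $\alpha\ox\alpha^{-\vee}$ that you carry out). The only cosmetic point is that the embedding into $H^0(X,E^\vee\ox E'\ox\KK)$ is via the maps $\beta_1,\beta_2$ of diagram (\ref{diagL}) rather than via $\hat\beta$ itself, but $\hat\beta(V)=V'$ is equivalent to $\beta_1(V)=\beta_2(V')$ and each of these subspaces is independent of the chosen $\beta$, so your canonicity check covers it.
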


Fix a possible type $(n,d,k)$ with $(n,d)=1$ and $k\leq n^2(g-1)+1$. We denote by $SH(n,d,k)$ the subset of $H(n,d,k)$ consisting of Higgs coherent systems $(E,V)$ of type $(n,d,k)$ such that $E$ is a stable vector bundle. 
Before we state and prove our main result, we recall some properties of the moduli space of stable vector bundles. 

Let $M^s(n,d)$ denote the moduli space of stable vector bundles of type $(n,d)$ over $X$. It is a smooth projective variety of dimension $n^2(g-1)+1$ (\cite[Remark 5.9]{newstead}). Moreover, its tangent bundle $T M^s(n,d)$ is a locally free sheaf of rank $n^2(g-1)+1$, and for any vector bundle $E$ in $M^s(n,d)$, it follows that $T_E M^s(n,d)=H^1(X,\End(E))$ (\cite[Theorem 2.7 and Corollary 2.8]{hartshornedefor}). We have the following result.

\begin{teo} \label{ParaStable} 
The $k$-th Grassmannian bundle $\pi\colon \Grass(k,\Omega^1 M^s(n,d))\to M^s(n,d)$ is in one-to-one correspondence with $SH(n,d,k)/\sim$. In particular, the space of isomorphism classes of Higgs coherent systems $SH(n,d,k)$ is a smooth irreducible projective scheme of dimension $(k+1)(n^2(g-1)+1)-k^2$. 
\end{teo}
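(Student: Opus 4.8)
The plan is to establish the bijection between the total space of the Grassmannian bundle $\pi\colon \Grass(k,\Omega^1 M^s(n,d))\to M^s(n,d)$ and the set $SH(n,d,k)/\sim$, and then to read off the geometric conclusions from the known structure of Grassmannian bundles over smooth projective varieties. The key point making the bijection work is Remark \ref{simpleiso}: since every stable bundle is simple, two Higgs coherent systems $(E,V)$ and $(E',V')$ with $E,E'$ stable are isomorphic if and only if $E\cong E'$ as bundles \emph{and} $V=V'$ under the resulting identification of Higgs-field spaces. Thus an isomorphism class in $SH(n,d,k)/\sim$ is the same datum as a point $[E]\in M^s(n,d)$ together with a genuine $k$-dimensional subspace of $H^0(X,\End(E)\ox\KK)$.

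First I would identify the fiber of the Grassmannian bundle over a point $[E]\in M^s(n,d)$. By the recalled properties of the moduli space, $T_E M^s(n,d)=H^1(X,\End(E))$, so the cotangent space is $\Omega^1 M^s(n,d)|_{[E]}=H^1(X,\End(E))^\vee$, which by Serre duality is canonically $H^0(X,\End(E)\ox\KK)$ — exactly the space of Higgs fields of $E$ (this is the pointwise version of Lemma \ref{fibre}). Hence the fiber $\pi^{-1}([E])=\Grass(k,H^0(X,\End(E)\ox\KK))$, and a point of this fiber is precisely a $k$-dimensional subspace $V$, i.e. a Higgs coherent system $(E,V)$ with $E$ stable. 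Combining this with the isomorphism criterion above gives a well-defined set map from the total space to $SH(n,d,k)/\sim$, sending $(\,[E],V\,)$ to the class of $(E,V)$.

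Next I would check this map is a bijection. Surjectivity is immediate, since every $(E,V)\in SH(n,d,k)$ arises this way from $([E],V)$. Injectivity follows directly from Remark \ref{simpleiso}: if $([E],V)$ and $([E'],V')$ map to isomorphic classes, then $E\cong E'$ forces $[E]=[E']$ in $M^s(n,d)$, and the Serre-duality identification is compatible with the isomorphism $E\cong E'$ (since $\hat\alpha=H^0(\alpha\ox\alpha^{-\vee}\ox id_\KK)$ is exactly the transpose-dual of the induced iso on $H^1(X,\End(E))$), so $V=V'$ as subspaces. Once the bijection is in place, the geometric statement is standard: $M^s(n,d)$ is smooth irreducible projective of dimension $n^2(g-1)+1$, and $\Omega^1 M^s(n,d)$ is locally free of that rank, so the associated Grassmannian bundle $\Grass(k,\Omega^1 M^s(n,d))$ is again smooth, irreducible (total space of a fiber bundle with irreducible base and irreducible Grassmannian fibers), and projective, of dimension $n^2(g-1)+1+k\bigl((n^2(g-1)+1)-k\bigr)=(k+1)(n^2(g-1)+1)-k^2$.

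The main obstacle I anticipate is not the set-theoretic bijection but justifying that it transports the scheme structure: the statement asserts $SH(n,d,k)/\sim$ \emph{is} a smooth projective scheme, which requires that the Grassmannian bundle genuinely represents the moduli functor of families (\ref{ModProblem}) rather than merely being in bijection on points. I would address this by using the universal bundle on $X\times M^s(n,d)$ (available since $(n,d)=1$) to form $\R^{1^\vee}_{\mathcal{E}}$, whose fibers by Lemma \ref{fibre} are the Higgs-field spaces, and then invoking the universal property of the relative Grassmannian $\Grass(k,\R^{1^\vee}_{\mathcal{E}})$ over $M^s(n,d)$; the compatibility of this relative Grassmannian with the Serre-duality identification of $\R^{1^\vee}_{\mathcal{E}}$ with (the dual of) $\Omega^1 M^s(n,d)$ is the technical step that needs care, and it is precisely what the subsequent universal-family construction (used in Theorem \ref{ModuliHcsstable}) is designed to supply.
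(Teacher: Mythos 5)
Your proposal is correct and follows essentially the same route as the paper: the fiber of $\Grass(k,\Omega^1 M^s(n,d))$ over $[E]$ is identified with $\Grass(k,H^0(X,\End(E)\ox\KK))$ via $T_EM^s(n,d)=H^1(X,\End(E))$ and Serre duality, the bijection on classes comes from Proposition \ref{simpleclass} (simplicity of stable bundles), and the smoothness, irreducibility, projectivity and dimension count are read off from the Grassmannian bundle structure. Your additional remarks on fiberwise compatibility of $\hat\alpha$ with Serre duality and on transporting the scheme structure via the universal family are more careful than the paper's very terse argument, but they do not constitute a different approach.
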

\begin{proof}
The correspondence is consequence of the above properties of $M^s(n,d)$ and Proposition \ref{simpleclass}. For the last part, since $M^s(n,d)$ is a smooth, irreducible and projective, the $k$-th Grassmannian bundle $\Grass(k,\Omega^1 M^s(n,d))$ is a smooth irreducible projective scheme. Moreover, its dimension is equal to $\Dim(M^s(n,d))+\Dim(\Grass(k,\Omega^1_E M^s(n,d)))$, where $E$ is a stable vector bundle. Therefore, 
$$\Dim(\Grass(k,\Omega^1 M^s(n,d)))=(k+1)(n^2(g-1)+1)-k^2,$$
which completes the proof.
\end{proof}

Before proving that $\Grass(k,\Omega^1 M^s(n,d))$ is a fine moduli space, first we construct a family  $(\hat{\E},\mathcal{V})$ parameterized by $\Grass(k,\Omega^1 M^s(n,d))$. Since $(n,d)=1$, there is a universal vector bundle $\E$ over $X\times M^s(n,d)$ (\cite[Theorem 5.12]{newstead}). For the universal vector bundle, $\E$, we consider 
$$\R^{1^\vee}_{\E}=\Hom(R^1\pi_{2*}(\End(\mathcal{E})),\mathcal{O}_{M^s(n,d)}).$$ From the universal property of $\E$, it follows that $\R^{1^\vee}_{\E}=\Omega^1M^s(n,d)$ (\cite[Appendix 2.C.II and Theorem 10.2.1]{huybrechts}). 

We consider the pair $(\hat{\E} ,\V)$, defined as follows:
\begin{itemize}
\item[•] $\hat{\E}$ denote the pull-back $(id_X\times \pi)^*(\E)$
\begin{equation*}\label{Stable1}
\xymatrix{
\hat{\E}\ar[d]& &\E\ar[d]\\
X\times \Grass(k,\Omega^1 M^s(n,d))\ar[rr]^-{id_X\times \pi}& &X\times M^s(n,d)
}
\end{equation*}\label{Stable2}
\item[•] $\mathcal{V}$ denote the tautological sub-bundle of rank $k$ of $\pi^*(\Omega^1 M^s(n,d))$. The tautological sub-bundle fits in the universal exact sequence 
\begin{equation}
\xymatrix{
0\ar[r] & \mathcal{V}\ar[r] & \pi^*(\Omega^1 M^s(n,d))\ar[r]\ar[d] & \mathcal{Q}\ar[r] & 0\\
        &                   & \Grass(k,\Omega^1 M^s(n,d)),
}
\end{equation} 
where $\mathcal{Q}$ denote the tautological quotient.
\end{itemize}
 
\begin{lema}\label{familyStable}
The pair $(\hat{\E},\V)$ is a family of Higgs coherent systems in $SH(n,d,k)$ parameterized by $\Grass(k,\Omega^1 M^s(n,d))$.
\end{lema}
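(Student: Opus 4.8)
The plan is to verify the two defining conditions in the definition of a family of Higgs coherent systems of type $(n,d,k)$: first that $\hat{\E}$ is a family of vector bundles of type $(n,d)$ parameterized by $\Grass(k,\Omega^1 M^s(n,d))$, and second that $\V$ is a sub-bundle of rank $k$ of $\R^{1^\vee}_{\hat{\E}}$. The first condition is essentially immediate: $\hat{\E}=(id_X\times\pi)^*(\E)$ is the pull-back of the universal bundle $\E$ along $\pi\colon\Grass(k,\Omega^1 M^s(n,d))\to M^s(n,d)$, and since $\E$ restricts to a stable bundle of type $(n,d)$ over each point of $M^s(n,d)$, its pull-back restricts to a stable bundle of type $(n,d)$ over each point of the Grassmannian bundle. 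Hence every fiber $\hat{\E}_s$ lies in $M^s(n,d)$, so the underlying bundles are indeed stable as required for membership in $SH(n,d,k)$.

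The core of the argument is to identify $\R^{1^\vee}_{\hat{\E}}$ with $\pi^*(\Omega^1 M^s(n,d))$, so that the tautological sub-bundle $\V$ of rank $k$ of $\pi^*(\Omega^1 M^s(n,d))$ becomes a sub-bundle of rank $k$ of $\R^{1^\vee}_{\hat{\E}}$. First I would invoke the two identifications already established in the excerpt: for the universal bundle we have $\R^{1^\vee}_{\E}=\Omega^1 M^s(n,d)$, and Lemma \ref{flat_to_pull} asserts that for a flat morphism $\phi$ one has $\R^{1^\vee}_{\phi^*(\E)}=\phi^*(\R^{1^\vee}_\E)$. Since $\pi$ is the projection of a Grassmannian bundle, it is smooth, in particular flat, so Lemma \ref{flat_to_pull} applies with $\phi=\pi$ and gives
\begin{equation*}
\R^{1^\vee}_{\hat{\E}}=\R^{1^\vee}_{\pi^*(\E)}=\pi^*(\R^{1^\vee}_{\E})=\pi^*(\Omega^1 M^s(n,d)).
\end{equation*}
The tautological sub-bundle $\V$ sits inside $\pi^*(\Omega^1 M^s(n,d))$ by the universal exact sequence displayed above, and it has rank $k$; under the identification it is therefore a rank-$k$ sub-bundle of $\R^{1^\vee}_{\hat{\E}}$, which is exactly the second defining condition.

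Having verified both conditions, I would conclude by applying Lemma \ref{fibre}, which guarantees that for each $s$ the fiber $\V_s$ is a $k$-dimensional subspace of $H^0(X,\End(\hat{\E}_s)\otimes\KK)$, so that each pair $(\hat{\E}_s,\V_s)$ is a genuine Higgs coherent system of type $(n,d,k)$; combined with the stability of $\hat{\E}_s$ this places it in $SH(n,d,k)$. The one point requiring care, and what I expect to be the main obstacle, is the justification that Lemma \ref{flat_to_pull} legitimately applies here: that lemma is stated for a family $(\E,\V)$ together with its chosen sub-bundle, whereas here I am applying the underlying identification $\R^{1^\vee}_{\phi^*(\E)}=\phi^*(\R^{1^\vee}_\E)$ directly to the universal bundle. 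I would make explicit that this identification depends only on the family of bundles $\E$ and the flatness of $\pi$, not on any prior choice of sub-bundle, and that $\pi$ is flat because a Grassmannian bundle over a smooth base is smooth. With that observation the proof is complete.
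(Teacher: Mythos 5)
Your proposal is correct and follows essentially the same route as the paper: both arguments reduce to the identification $\R^{1^\vee}_{\hat{\E}}=\pi^*(\R^{1^\vee}_{\E})=\pi^*(\Omega^1 M^s(n,d))$, the only difference being that you obtain it by citing the flat base-change computation already carried out in Lemma \ref{flat_to_pull} (noting that $\pi$ is flat as a Grassmannian-bundle projection), whereas the paper repeats that computation inline. Your closing appeal to Lemma \ref{fibre} and the explicit flatness check are harmless additions that, if anything, make the argument slightly more careful than the paper's.
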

\begin{proof}
By construction, for all Higgs coherent systems $(E,V)$ in $SH(n,d,k)$, the restriction of $\hat{\E}$ to \mbox{$X\times \{(E,V)\}$} is isomorphic to $\E_E$, since $\E$ is a family of stable vector bundles of type $(n,d)$, we have that $\hat{\E}$ is also a family of stable vector bundles of type $(n,d)$.
From the commutative diagram
$$\xymatrix{
X\times \Grass(k,\Omega^1 M^s(n,d))\ar[rr]^-{id_X\times \pi}\ar[d]_{\pi_2}\ar@{}[drr(.99)]|-{\circlearrowleft}& &X\times M^s(n,d)\ar[d]^{\pi_2}\\
\Grass(k,\Omega^1 M^s(n,d))\ar[rr]_-\pi& &M^s(n,d),
}
$$
and the projection formula, we have that
\begin{align*}
R^1\pi_{2^*}(\End(\hat{\E}))& = R^1 \pi_{2^*}((id_X\times \pi)^*(\End(\E))) \\
                           & = \pi^*(R^1\pi_{2^*}(\End(\E)).
\end{align*}
Then,
\begin{align*}
\R^{1^\vee}_{\hat{\mathcal{E}}}= R^1\pi_{2^*}(\End(\hat{\E}))^\vee &= \pi^*(R^1\pi_{2^*}(\End(\E)))^\vee\\ 
                        &= \pi^*(R^1\pi_{2^*}(\End(\E))^\vee)\\
                        &= \pi^*(\mathcal{R}^{1^\vee}_\mathcal{E})\\
                        &=\pi^*(\Omega^1 M^s(n,d)).                      
\end{align*}
Therefore, $\V\subset\pi^*(\Omega^1 M^s(n,d))=\R^{1^\vee}_{\hat{\mathcal{E}}}$ is a locally free sheaf of rank $k$, and thus $(\hat{\E},\V)$ is a family of Higgs coherent systems in $SH(n,d,k)$, as claimed.
\end{proof}

In the next result, we will prove that the above family satisfies the universal property and thus there exists a fine moduli space for the moduli problem \mbox{$(SH(n,d,k),\sim, (\E,\mathcal{V}),\approx)$.}

\begin{teo}\label{ModuliHcsstable}
The $k$-th Grassmannian bundle, $\Grass(k,\Omega^1 M^s(n,d))$, together with the family $(\hat{\E},\mathcal{V})$ is a fine moduli space for the moduli problem \mbox{$(SH(n,d,k),\sim, (\E,\mathcal{V}),\approx)$}. Moreover, it is a smooth irreducible projective variety of dimension $(k+1)(n^2(g-1)+1)-k^2$.
\end{teo}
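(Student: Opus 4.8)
The plan is to verify the two defining properties of a fine moduli space for the problem $(SH(n,d,k),\sim,(\E,\V),\approx)$: that the closed points of $\Grass(k,\Omega^1 M^s(n,d))$ are in bijection with $SH(n,d,k)/\sim$, and that the family $(\hat{\E},\V)$ satisfies the universal property. The first property is exactly Theorem \ref{ParaStable}, and Lemma \ref{familyStable} already guarantees that $(\hat{\E},\V)$ is a bona fide family of objects of $SH(n,d,k)$; the ``moreover'' assertion on smoothness, irreducibility, projectivity and dimension is immediate from Theorem \ref{ParaStable}, since the underlying scheme is the very same Grassmannian bundle. Hence all the work lies in the universal property.

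So, given an arbitrary family $(\E',\V')$ of Higgs coherent systems in $SH(n,d,k)$ parameterized by a scheme $S$, I would produce a unique morphism $f\colon S\to \Grass(k,\Omega^1 M^s(n,d))$ with $f^*(\hat{\E},\V)\approx(\E',\V')$. First I would route through $M^s(n,d)$: since $(n,d)=1$ and $\E'$ is a family of stable bundles of type $(n,d)$, the fine moduli property of $M^s(n,d)$ supplies a unique classifying morphism $g\colon S\to M^s(n,d)$ with $(id_X\times g)^*(\E)\cong \E'\ox \pi_S^*(L)$ for some line bundle $L$ on $S$. The key point is that the endomorphism bundle is blind to this twist, so $\End((id_X\times g)^*(\E))\cong \End(\E')$; repeating the projection-formula and base-change computation of Lemma \ref{familyStable} then yields the canonical identification
$$\R^{1^\vee}_{\E'}=g^*(\R^{1^\vee}_{\E})=g^*(\Omega^1 M^s(n,d)).$$
Here one must invoke cohomology and base change as in Lemma \ref{fibre}: because $E$ stable forces $h^0(\End(E))=1$, the function $s\mapsto h^1(\End(\E'_s))=n^2(g-1)+1$ is constant, so $R^1\pi_{2^*}(\End(\E'))$ is locally free and commutes with base change, exactly as in the identity $\R^{1^\vee}_{\E}=\Omega^1 M^s(n,d)$.

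With this identification, $\V'$ is a rank-$k$ sub-bundle of $g^*(\Omega^1 M^s(n,d))$, and the universal property of the relative Grassmannian bundle $\pi\colon \Grass(k,\Omega^1 M^s(n,d))\to M^s(n,d)$ provides a unique $f$ with $\pi\circ f=g$ and $f^*(\V)=\V'$ as sub-bundles of $g^*(\Omega^1 M^s(n,d))$; phrasing this step via the tautological quotient $\mathcal{Q}$ (as in the remark following Lemma \ref{flat_to_pull}) is what allows $f$ to be arbitrary rather than flat. I would then check the equivalence $f^*(\hat{\E},\V)\approx(\E',\V')$ fiberwise: at a point $s$, the underlying bundles $(f^*\hat{\E})_s=\E_{g(s)}$ and $\E'_s$ are isomorphic by construction of $g$, and under the induced identification of $H^0(X,\End(\cdot)\ox \KK)$ from (\ref{finvectors}) the subspaces $(f^*\V)_s$ and $\V'_s$ coincide because $f^*\V=\V'$; since both bundles are simple, Remark \ref{simpleiso} then gives $(\E_{g(s)},\V_{f(s)})\sim(\E'_s,\V'_s)$.

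For uniqueness, if $f'$ also satisfies $(f')^*(\hat{\E},\V)\approx(\E',\V')$, then fiberwise isomorphism of the underlying bundles forces $\pi\circ f'=g$ by the uniqueness half of the fine moduli property of $M^s(n,d)$; the sub-bundle $(f')^*\V$ of $g^*(\Omega^1 M^s(n,d))$ then agrees fiberwise with $\V'$, and this agreement must be an equality of sub-bundles precisely because stability rigidifies the $V$-component, since by Proposition \ref{simpleclass} a simple bundle admits no nontrivial reparameterization of its Higgs subspaces. Hence $f'=f$ by the uniqueness in the Grassmannian universal property. I expect the main obstacle to be exactly this rigidification together with the base-change identification of $\R^{1^\vee}_{\E'}$: one must be careful that the line-bundle twist in the universal bundle of $M^s(n,d)$ is annihilated by $\End$, that cohomology and base change genuinely applies, and finally that the fiberwise equivalence $\approx$ can be upgraded to the scheme-theoretic equality $f^*\V=\V'$ required to glue the two universal properties together.
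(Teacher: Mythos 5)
Your proof follows essentially the same route as the paper's: combine Theorem \ref{ParaStable} and Lemma \ref{familyStable}, then obtain the classifying map by factoring through the universal property of $M^s(n,d)$ (for the underlying bundles) followed by that of the relative Grassmannian bundle (for the tautological sub-bundle), and verify the equivalence fiberwise. The only difference is that you spell out details the paper leaves implicit --- the line-bundle twist in the universal bundle being killed by $\End$, cohomology and base change for a possibly non-flat classifying morphism, and the uniqueness argument via the rigidity of Proposition \ref{simpleclass} --- all of which are correct and, if anything, needed to make the paper's terser argument airtight.
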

\begin{proof} The Grassmannian bundle $\Grass(k,\Omega^1 M^s(n,d))$ is in one-to-one correspondence with the space of isomorphism classes of Higgs coherent systems in $SH(n,d,k)$ (Theorem \ref{ParaStable}), and the pair $(\hat{\E},\mathcal{V})$ is a family parameterized by $\Grass(k,\Omega^1 M^s(n,d))$ (Proposition \ref{familyStable}). It is enough to prove that $(\hat{\E},\mathcal{V})$ satisfies the universal property.  To do this, note that, from the universal property of $\E$ and the definition of $\V$, we have that restriction of $(\hat{\E},\mathcal{V})$ to \mbox{$X \times \{(E,V)\}$} is isomorphic to $(E,V)$. We claim that the universal properties of $\E$ and $\V$, implies that $(\hat{\E},\mathcal{V})$ satisfies the universal property. Indeed, let $(\E',\V')$ be a family of Higgs coherent systems in $SH(n,d,k)$ parameterized by $T$. By the universal property of $\E$, there exists a unique morphism $\phi\colon T\to M^s(n,d)$ such that  $\E'$ is equivalent to $\phi^*(\E)$. Moreover, by the universal property of $\V$, there exist a unique morphism $\hat{\phi}\colon T\to \Grass(k,\Omega^1 M^s(n,d))$, such that $\hat{\phi}^*(\V)=\V'$ (\cite[Proposition 8.17(1)]{urlich}). We have that
$$\hat{\phi}^*(\hat{\E})=(id_X\times \hat{\phi})^*((id_X\times\pi)^*(\E))=(id_X\times (\pi\circ \hat{\phi}))^*(\E)=(\pi\circ \hat{\phi})^*(\E).$$
Note that, for all $t\in T$, $\V'_t=\hat{\phi}^*(\V)_t=\V_{\hat{\phi}(t)}=\hat{\phi}(t)$ and $\hat{\phi}^*(\hat{\E})_t=(\pi\circ\hat{\phi})^*(\E)_t=\E_{\pi(\hat{\phi}(t))}=\E_{\pi(\V'_t)}=\E '_t$. 
Therefore, $\hat{\phi}^*(\hat{\E},\mathcal{V})$ is equivalent to $(\E',\V')$, as claimed. 
\end{proof}

\begin{obs}
\em
It is known that if $(n,d)\neq 1$, the universal family of stable vector bundles $\E$ does not exist, that is a fine moduli space for stable vector bundles does not exist (\cite[Theorem 2]{ramanan}). For this case, such a universal family exists locally in the étale topology (\cite[Theorem 1.21. (4)]{simpson}). Since sheaves in the Zariski topology can be consider as sheaves in the étale topology \cite[Proposition 4.15]{knutson}, for any type $(n,d,k)$ with $k<n^2(g-1)+1$, we can proceed locally with the above arguments to obtain a local version of Theorem \ref{ModuliHcsstable}.
\end{obs}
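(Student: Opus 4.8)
The plan is to show that, when $(n,d)\neq 1$, every ingredient of the construction carried out in Lemma \ref{familyStable} and Theorem \ref{ModuliHcsstable} either descends to a globally defined object on $M^s(n,d)$ or survives in the \'etale topology, so that the conclusion continues to hold \'etale-locally. First I would record the local data. By \cite[Theorem 2]{ramanan} there is no universal bundle on $X\times M^s(n,d)$; but by \cite[Theorem 1.21.(4)]{simpson} there is an \'etale cover $\{g_i\colon U_i\to M^s(n,d)\}$ and, on each $X\times U_i$, a universal bundle $\E_i$ restricting to the corresponding stable bundle on every fibre. On the overlaps $U_{ij}:=U_i\times_{M^s(n,d)}U_j$ the two restrictions agree only up to a twist by a line bundle pulled back from the base, that is, $\E_i|_{X\times U_{ij}}\cong \E_j|_{X\times U_{ij}}\otimes \pi_2^*L_{ij}$ for suitable line bundles $L_{ij}$ on $U_{ij}$.

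The key observation, and the reason the argument localizes cleanly, is that the construction $\R^{1^\vee}_{\E}=R^1\pi_{2^*}(\End(\E))^\vee$ depends only on the endomorphism bundle $\End(\E)$, and there is a canonical isomorphism $\End(\E\otimes \pi_2^*L)\cong \End(\E)$ for every line bundle $L$ on the base (the twist cancels in $\E\otimes\E^\vee$). Hence the transition twists $L_{ij}$ disappear, and the locally free sheaves $\R^{1^\vee}_{\E_i}$ are canonically identified with $g_i^*(\Omega^1 M^s(n,d))$ together with the obvious descent datum; by the same identification used before Theorem \ref{ParaStable} (\cite[Appendix 2.C.II and Theorem 10.2.1]{huybrechts}) the descended global object is precisely $\Omega^1 M^s(n,d)$, which exists irrespective of coprimality. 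Consequently the Grassmannian bundle $\Grass(k,\Omega^1 M^s(n,d))$, its tautological subbundle $\V$, and the universal exact sequence are all defined globally and independently of any choice of local universal family. Moreover the one-to-one correspondence with $SH(n,d,k)/\sim$ of Theorem \ref{ParaStable} rests only on Proposition \ref{simpleclass} and the simplicity of stable bundles, so it too is unaffected.

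Next I would reconstruct the universal family \'etale-locally. Set $U_i':=U_i\times_{M^s(n,d)}\Grass(k,\Omega^1 M^s(n,d))$, which is \'etale over the Grassmannian bundle, with projection $\pi_i\colon U_i'\to U_i$, and put $\hat{\E}_i:=(id_X\times \pi_i)^*(\E_i)$ on $X\times U_i'$. The computation of Lemma \ref{familyStable} applies verbatim over each $U_i'$: the projection formula gives $\R^{1^\vee}_{\hat{\E}_i}=\pi_i^*(\R^{1^\vee}_{\E_i})=\pi_i^*(g_i^*\Omega^1 M^s(n,d))$, into which $\V|_{U_i'}$ embeds as a rank-$k$ subbundle, so $(\hat{\E}_i,\V|_{U_i'})$ is a family of Higgs coherent systems in $SH(n,d,k)$. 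Exactly as in the proof of Theorem \ref{ModuliHcsstable}, the \'etale-local universal property of $\E_i$ supplied by \cite[Theorem 1.21.(4)]{simpson}, combined with the universal property of the tautological $\V$, shows that $(\hat{\E}_i,\V|_{U_i'})$ is universal for families whose classifying map factors through $U_i'$ --- a condition that, for an arbitrary family over $T$, can always be arranged after passing to an \'etale cover of $T$. Since the Grassmannian bundle is globally defined and a sheaf in the Zariski topology may be regarded as a sheaf in the \'etale topology (\cite[Proposition 4.15]{knutson}), these \'etale-local universal families assemble into the statement that $\Grass(k,\Omega^1 M^s(n,d))$ is a fine moduli space for $(SH(n,d,k),\sim,(\E,\V),\approx)$ in the \'etale topology, which is the desired local version of Theorem \ref{ModuliHcsstable}.

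The main obstacle I expect is precisely the failure to glue the $\hat{\E}_i$ into a single global universal Higgs coherent system family: the cancellation $\End(\E\otimes\pi_2^*L)\cong\End(\E)$ that rescues the base sheaf $\Omega^1 M^s(n,d)$ does \emph{not} rescue $\E$ itself, so the $\hat{\E}_i$ differ on overlaps by the nontrivial twists $\pi_2^*L_{ij}$ and no global $\hat{\E}$ exists --- this is the obstruction of \cite[Theorem 2]{ramanan}, now transported to the Grassmannian bundle. What makes the \'etale-local statement nonetheless correct is that the equivalence relation $\approx$ only tests the isomorphism class of each fibre $(\E_s,\V_s)$, and by Definition \ref{Defiso} together with the invariance of $\End$ under a base twist, every such fibre is unchanged by tensoring with a line bundle pulled back from the base; thus the transition twists are invisible to the moduli functor. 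The hypothesis $k<n^2(g-1)+1$ enters only to keep each fibre $\Grass(k,n^2(g-1)+1)$ a genuine positive-dimensional Grassmannian, so that the \'etale descent of $\V$ and of the local universal families proceeds without degenerate fibres.
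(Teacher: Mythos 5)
Your proposal is correct and follows essentially the same route the paper's remark sketches: replace the missing global universal bundle by Simpson's \'etale-local universal families, rerun the constructions of Lemma \ref{familyStable} and Theorem \ref{ModuliHcsstable} over an \'etale cover, and use \cite[Proposition 4.15]{knutson} to pass between the Zariski and \'etale topologies. The details you supply beyond the paper's one-sentence sketch --- that on overlaps the local universal bundles differ by twists $\pi_2^*L_{ij}$ which cancel in $\End(\E_i)$, so that $\R^{1^\vee}$ descends to the globally defined $\Omega^1 M^s(n,d)$ and only the families $\hat{\E}_i$ themselves fail to glue (the Ramanan obstruction), while the fibrewise equivalence $\approx$ is blind to such twists --- are exactly the implicit content of ``proceed locally with the above arguments,'' and they are accurate.
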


In \cite{oscar}, the authors study Higgs coherent systems of type $(1,d,k)$. They consider notions of families and equivalence between them that differ from the corresponding notions treated in this work. In particular, for the concept of a family they consider a zero direct image instead of the first direct image as in our case (see \cite[Definición 3.3.]{oscar}). Nevertheless, their definition has a solvable issue: in general, for a family $(\E,\V)$ parameterized by $T$ and a point $t$ in $T$. it is not necessary true that $(\E,\V)_t$ is a Higgs coherent system (cf. \cite[Corollary 4.2.9]{potier}), as claimed in \cite[Proposición 3.1.1 (a)]{oscar}. This issue can be solved, for example, by asking for the extra condition of injectivity, as in coherent systems (cf. \cite[Definition 1.11]{ragha}), on the natural morphism between the fiber of the zero direct image and the space of twisted endomorphisms. 

We now apply our approach to construct the moduli space of Higgs coherent systems of type $(1,d,k)$. Let $\Pic^d(X)$ be the Picard variety of $X$ and $\LL^d$ its Poincaré bundle. It is known that the tangent space at a line bundle $L$ in $\Pic^d(X)$ is $H^1(X,\Oc_X)$. Since $\Pic^d(X)$ is a smooth variety, its tangent sheaf is locally free and has rank $g$. Moreover, since $\Pic^d(X)$ is isomorphic to $\Pic^0(X)$, which is an abelian variety, its tangent bundle is trivial (\cite[Pag. $42$ (iii)]{mumfordabe}). By the universal property of $\mathcal{L}^d$, it follows that \mbox{$\R^{1^\vee}_{\mathcal{L}^d}=\Omega^1\Pic^d(X)=H^0(X,\KK)\ox \Oc_{\Pic^d(X)}$} (\cite[Theorem 10.2.1]{huybrechts}), and hence the $k$-th Grassmannian bundle $\Grass(k,\Omega^1\Pic^d(X))$ is the product $\Pic^d(X)\times \Grass(k,H^0(X,\KK))$. In particular, from Theorem \ref{ModuliHcsstable}, for Higgs coherent systems of type $(1,d,k)$, with $k\leq g$, we obtain the following result (cf. \cite[Teorema 3.2 ]{oscar}).
\begin{cor}\label{ModuliHcs1dk}
The moduli space of Higgs coherent systems of type $(1,d,k)$ is $\Pic^d(X)\times \Grass(k,H^0(X,\KK))$ together with the family $(\hat{\LL^d},\mathcal{V})$. Moreover, it is a smooth irreducible projective variety of dimension $g(k+1)-k^2$.
\end{cor}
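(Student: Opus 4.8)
The plan is to obtain Corollary \ref{ModuliHcs1dk} as a direct specialization of Theorem \ref{ModuliHcsstable} to the rank-one case $n=1$, after reconciling the fact that the theorem is stated for $n\geq 2$ and under the stability hypothesis. The key observation is that every line bundle is stable (indeed simple), so for $n=1$ the set $SH(1,d,k)$ coincides with all of $H(1,d,k)$, and the moduli space of stable bundles $M^s(1,d)$ is just the Picard variety $\Pic^d(X)$. Thus the entire machinery of Section \ref{Sec3} applies verbatim once we substitute $M^s(n,d)$ by $\Pic^d(X)$ and the universal bundle $\E$ by the Poincaré bundle $\LL^d$.

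First I would record the identification of the relevant Grassmannian bundle. The preamble to the corollary already establishes that $\R^{1^\vee}_{\LL^d}=\Omega^1\Pic^d(X)$, and that since $\Pic^d(X)$ is an abelian variety its cotangent bundle is trivial, equal to $H^0(X,\KK)\ox\Oc_{\Pic^d(X)}$. Consequently the Grassmannian bundle $\Grass(k,\Omega^1\Pic^d(X))$ splits as the product $\Pic^d(X)\times\Grass(k,H^0(X,\KK))$. Next I would invoke Theorem \ref{ModuliHcsstable} directly: the pair $(\widehat{\LL^d},\V)$, built exactly as $(\hat{\E},\V)$ but starting from the Poincaré bundle $\LL^d$ in place of $\E$, is a family of Higgs coherent systems with the universal property, so $\Grass(k,\Omega^1\Pic^d(X))$ is the fine moduli space. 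The one point requiring the hypothesis $k\leq g$ is nonemptiness: as noted after \eqref{DimEndTor}, $H(1,d,k)\neq\emptyset$ if and only if $k\leq g=h^0(\KK)$, which is exactly the constraint $k\leq n^2(g-1)+1$ from Theorem \ref{ParaStable} evaluated at $n=1$.

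Finally I would compute the dimension by specializing the formula $(k+1)(n^2(g-1)+1)-k^2$ at $n=1$, giving $(k+1)g-k^2=g(k+1)-k^2$, which matches the claimed value; smoothness, irreducibility, and projectivity are inherited from Theorem \ref{ModuliHcsstable} since $\Pic^d(X)$ is a smooth irreducible projective variety and the Grassmannian bundle over it is too.

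The main obstacle I anticipate is purely bookkeeping rather than mathematical: Theorem \ref{ModuliHcsstable} is phrased under the running assumption $n\geq 2$ and the construction of $\E$ relies on $(n,d)=1$, whereas here $n=1$ trivially satisfies $(1,d)=1$ but sits outside the stated range. The careful step is therefore to verify that nothing in the proof of Theorem \ref{ModuliHcsstable} genuinely used $n\geq 2$ — it used only the existence of a universal bundle, the identity $\R^{1^\vee}_\E=\Omega^1 M^s(n,d)$, and Proposition \ref{simpleclass}, all of which hold for line bundles since line bundles are simple and $\Pic^d(X)$ is a fine moduli space. Once this is checked, the corollary follows with essentially no further work, and the comparison with \cite[Teorema 3.2]{oscar} can be stated as a remark.
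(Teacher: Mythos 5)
Your proposal is correct and follows essentially the same route as the paper: identify $M^s(1,d)$ with $\Pic^d(X)$, use the triviality of $\Omega^1\Pic^d(X)=H^0(X,\KK)\ox\Oc_{\Pic^d(X)}$ to split the Grassmannian bundle as a product, and specialize Theorem \ref{ModuliHcsstable} together with the dimension formula at $n=1$. Your explicit check that nothing in the proof of Theorem \ref{ModuliHcsstable} genuinely requires the running assumption $n\geq 2$ is a worthwhile point of care that the paper leaves implicit.
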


{\bf Acknowledgments.} This paper has its origins in the PhD thesis of the author \cite{Ich0}, the research problem of which was proposed to the author by Professor Leticia Brambila-Paz. I thank Professor Alexander Quinteto Vélez and Professor Ronald A. Zúñiga-Rojas for their enlightening discussions, which significantly contributed to this work.

\bibliographystyle{alpha}
\bibliography{References}

\end{document}